\newtheorem{thm}{Theorem}[section]
\newtheorem{prop}[thm]{Proposition}
\newtheorem{lem}[thm]{Lemma}
\newtheorem{lemma}[thm]{Lemma}
\theoremstyle{remark}
\newtheorem{rem}{Remark}
\newcommand{\sumstar}{\sideset{}{^*}\sum}
\newcommand{\sumb}{\sideset{}{^\flat}\sum}
\newcommand{\sumd}{\sideset{}{^d}\sum}
\newcommand{\tRe}{\textup{Re }}
\newcommand{\bfrac}[2]{\left(\frac{#1}{#2}\right)}
\newcommand{\M}{\mathcal M}
\newcommand{\chiq}{\chi \textup{ (mod $q$)}}
\newcommand{\chir}{\chi \textup{ (mod } r)}
\newcommand{\cb}{\overline{\chi}}
\newcommand{\V}{V}
\newcommand{\W}{\mathcal W}
\newcommand{\D}{\mathcal D (\Psi, Q)}
\newcommand{\Df}{\mathcal D_f (\Psi, Q)}
\newcommand{\Sm}{\mathcal S (\Psi, Q)}
\newcommand{\Smf}{\mathcal S_f (\Psi, Q)}
\newcommand{\Smo}{\mathcal{S}_1 (\Psi, Q)}
\newcommand{\Mg}{\mathcal{MG}(\Psi, Q)}
\newcommand{\Eg}{\mathcal{EG}(\Psi, Q)}
\newcommand{\G}{\mathcal G (\Psi, Q)}
\newcommand{\Gf}{\mathcal G_f (\Psi, Q)}
\newcommand{\Hc}{\mathcal H}
\newcommand{\B}{\mathcal B}
\newcommand{\F}{\mathcal F}
\newcommand{\R}{{\mathrm{Re}}}
\newcommand{\h}{\frac{1}{2}} 
\newcommand{\lon}{(\log Q)^{\alpha}}
\newcommand{\lotwo}{(\log Q)^{2\alpha}}
\newcommand{\phib}{\phi^\flat(q)}
\newcommand{\dt}{\widetilde{\Delta}}
\newcommand{\Wt}{\widetilde{\mathcal{W}}}
\newcommand{\psit}{\widetilde{\Psi}}
\begin{document}
\title{The eighth moment of Dirichlet $L$-functions} 
\author{Vorrapan Chandee}
\address{Department of Mathematics \\ Burapha University \\ 169 Long-Hard Bangsaen Road, Saensook Municipality, Muang District, Chonburi, Thailand 20131
}
\email{vorrapan@buu.ac.th}

\author{Xiannan Li}
\address{University of Oxford \\
Andrew Wiles Building\\
Radcliffe Observatory Quarter\\
Woodstock Road,  Oxford\\
UK\\
OX2 6GG}
\email{lix1@maths.ox.ac.uk}

\subjclass[2010]{Primary: 11M06, Secondary: 11M26}
\keywords{Moments of Dirichlet L-functions, Asymptotic large sieve}


\begin{abstract} 
We prove an asymptotic for the eighth moment of Dirichlet $L$-functions averaged over primitive characters $\chi$ modulo $q$, over all moduli $q\leq Q$ and with a short average on the critical line, conditionally on GRH.  We derive the analogous result for the fourth moment of Dirichlet twists of $GL(2)$ L-functions.  Our results match the moment conjectures in the literature; in particular, the constant 24024 appears as a factor in the leading order term of the eighth moment.
\end{abstract}

\maketitle
\section{Introduction} 
There has been substantial and sustained research into moments of $L$-functions on the critical line.  Much of this interest is generated by the presence of numerous applications, but moments are also studied for their own intrinsic interest.  As well, understanding moments of $L$-functions involves developing tools which better elucidates the nature of the approximate orthogonality of the family of $L$-functions under consideration.   

The first moments studied were those of the Riemann zeta function, which are averages of the form
$$I_k(T) := \int_0^T |\zeta(\tfrac{1}{2} + it)|^{2k}dt.
$$  Here, asymptotic formulae were proven for $k = 1$ by Hardy and Littlewood and for $k=2$ by Ingham (see \cite{Ti} VII).  Despite extensive further work, including various refinements of the result of Ingham, no such result is available for any other values of $k$.  

A well known folklore conjecture states that  $I_k(T) \sim c_k T(\log T)^{k^2}$ for constants $c_k$ depending on $k$. The values of $c_k$ were mysterious for general $k$ until the work of Keating and Snaith \cite{KS} which related these moments to circular unitary ensembles and provided precise conjectures for $c_k$.  The choice of group is consistent with the Katz-Sarnak philosophy \cite{KaSa}, which indicates that the symmetry group associated to this family should be unitary.  Based on heuristics for shifted divisor sums, Conrey and Ghosh derived a conjecture in the case $k=3$ \cite{CGh} and Conrey and Gonek derived a conjecture in the case $k=4$ \cite{CGo}.  Further conjectures including lower order terms, and for other symmetry groups are available from the work of Conrey, Farmer, Keating, Rubinstein and Snaith \cite{CFKRS} as well as from the work of Diaconu, Goldfeld and Hoffstein \cite{DGH}.

In support of these conjectures, lower bounds of the the right order of magnitude are available due to Rudnick and Soundararajan \cite{Lower}, while good upper bounds are available conditionally on RH, due to Soundararajan \cite{Sound}, of the form $I_k(T) \ll T(\log T)^{k^2+\epsilon}$ .

The situation for other families of $L$-functions is very similar; asymptotics are only available for small values of $k$.  While the asymptotic for the sixth moment of $\zeta(1/2+it)$ remains out of reach, Conrey, Iwaniec and Soundararajan \cite{CIS} have recently derived an asymptotic formula for the sixth moment of Dirichlet $L$-functions with a power saving error term.  Instead of fixing the modulus $q$ and only averaging over characters $\chiq$, they also average over the modulus $q \leq Q$, which gives them a larger family of size $Q^2$, and further they include a short average on the critical line.  Since the family they consider is also unitary, the asymptotic is very similar to the conjectured asymptotic for the sixth moment of $\zeta(1/2+it)$.

To be more precise, let $\chiq$ be a primitive even Dirichlet character, and let (for $\tRe s > 1$),
$$ L(s, \chi) = \sum_{n=1}^\infty \frac{\chi(n)}{n^s} = \prod_p \left( 1 - \frac{\chi(p)}{p^s}\right)^{-1} $$ 
be the $L$-function associated to it.  Then the completed $L$-function $\Lambda(s, \chi)$ satisfies
$$ \Lambda\big( \tfrac{1}{2} + s, \chi \big)  = \left(\frac{q}{\pi}\right)^{s/2} \Gamma \left(\tfrac{1}{4} + \tfrac{s}{2}\right) L\big( \tfrac{1}{2} + s, \chi\big) = \epsilon_\chi \Lambda\big( \tfrac{1}{2} - s, \overline{\chi}\big),$$
where $|\epsilon_\chi| = 1$.  

Let $\sumb_{\chiq}$ denote a sum over primitive even Dirichlet characters with modulus $q$, and $\phib$ denote the number of primitive even Dirichlet characters with modulus $q$.  Then Corollary 1 in the work of Conrey, Iwaniec, and Soundararajan \cite{CIS} states
\begin{align*}
 \sum_{q\leq Q} \;\; &\sumb_{\chiq} \int_{-\infty}^{\infty} \left| \Lambda\big( \tfrac{1}{2} + iy, \chi \big)\right|^6 \> dy \\
&\sim 42 a_3\sum_{q\leq Q}  \prod_{p|q} \frac{\left( 1 - \frac{1}{p}\right)^5}{\left( 1 + \frac{4}{p} + \frac{1}{p^2}\right)} \phib \frac{(\log q)^{9}}{9!} \int_{-\infty}^{\infty} \left| \Gamma \left( \frac{1/2 + iy}{2}\right)\right|^6 \> dy,
\end{align*}where
$$a_3 = \prod_p \left(1-\frac{1}{p^4}\right)\left(1+\frac{4}{p} + \frac{1}{p^2}\right).
$$This is consistent with the conjecture in \cite{CFKRS}, and fits with the analogous conjecture for $\zeta(1/2+it)$ in \cite{CGo}.  The authors of \cite{CIS} later state a more precise technical result which gives the asymptotic for the sixth moment including shifts with a power saving error term.  The average over $y$ is fairly short due to the rapid decay of the $\Gamma$ function along vertical lines, but deriving an analogous result without the average over $y$ remains an open problem which involves understanding certain unbalanced sums.  Here, the restriction to even characters is a technical convenience, and the analogous result may be derived for odd characters using the same method.


In this paper, we shall derive asymptotics for the eighth moment of Dirichlet $L$-functions and for the fourth moment of Dirichlet twists of certain $GL(2)$ $L$-functions, conditionally on GRH. 

From \cite{CFKRS}, we may derive the conjecture that as $q \rightarrow \infty$ with $q \neq 2$ (mod 4),
$$ \frac{1}{\phib} \sumb_{\chiq} \big|L\big(\tfrac{1}{2}, \chi \big)\big|^8 \sim 24024 \ a_4 \prod_{p | q} \frac{\left( 1 - \frac{1}{p}\right)^7}{\left( 1 + \frac{9}{p} + \frac{9}{p^2} + \frac{1}{p^3}\right)} \frac{(\log q)^{16}}{16!},$$
where 
\begin{equation} \label{eqn:arithfactor}
a_4 = \prod_{p} \left( 1 - \frac{1}{p}\right)^9 \left(1 + \frac{9}{p} + \frac{9}{p^2} + \frac{1}{p^3} \right).
\end{equation}
Towards this conjecture, we shall prove that
\begin{align}
\label{eqnCor8}
 \sum_{q \leq Q} \ \sumb_{\chiq} &\int_{-\infty}^{\infty} \left| \Lambda\big( \tfrac{1}{2} + iy, \chi \big)\right|^8  \> dy \notag \\
&\sim 24024\ a_4\sum_{q \leq Q} \prod_{p|q} \frac{\left( 1 - \frac{1}{p}\right)^7}{\left( 1 + \frac{9}{p} + \frac{9}{p^2} + \frac{1}{p^3}\right)} \phib \frac{(\log q)^{16}}{16!} \int_{-\infty}^{\infty} \left| \Gamma \left( \frac{1/2 + iy}{2}\right)\right|^8 \> dy 
\end{align}conditionally on GRH.

Our result (\ref{eqnCor8}) follows immediately from the following theorem.
\begin{thm}\label{thm:eightmoment} Assume GRH. Let $\Psi$ be a smooth function compactly supported in [1, 2]. Then, we have
\begin{align*}
 \sum_{q } \Psi\left(\frac{q}{Q}\right) &\sumb_{\chiq} \int_{-\infty}^{\infty} \left| \Lambda\big( \tfrac{1}{2} + iy, \chi \big)\right|^8 \> dy \\
&= 24024\ a_4\sum_{q}  \Psi\left(\frac{q}{Q}\right) \prod_{p|q} \frac{\left( 1 - \frac{1}{p}\right)^7}{\left( 1 + \frac{9}{p} + \frac{9}{p^2} + \frac{1}{p^3}\right)} \phib \frac{(\log q)^{16}}{16!} \int_{-\infty}^{\infty} \left| \Gamma \left( \frac{1/2 + iy}{2}\right)\right|^8 \> dy \\
& \hskip 2in + O(Q^2(\log Q)^{15+ \epsilon}). 
\end{align*}
\end{thm}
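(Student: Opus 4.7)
The plan is to use an approximate functional equation (AFE) to represent $|\Lambda(\tfrac12+iy,\chi)|^8 = \Lambda(\tfrac12+iy,\chi)^4 \Lambda(\tfrac12-iy,\bar\chi)^4$ as a smooth sum
$$
\sum_{m,n} \frac{d_4(m) d_4(n)\,\chi(m)\bar\chi(n)}{\sqrt{mn}} \, V_y\!\left(\frac{mn}{q^2}\right) + \text{(symmetric dual term)},
$$
where $V_y$ is a smooth cutoff arising from shifting a $\Gamma$-factor and a Mellin kernel. Since the conductor of $\Lambda^4$ is $(q/\pi)^4$, the AFE balances at $mn \asymp q^4(1+|y|)^4$, so each of $m,n$ is effectively of size $q^2(1+|y|)^2$. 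The arithmetic factor $d_4(m) d_4(n)$ appears because $\Lambda(s,\chi)^4 = \sum d_4(n)\chi(n) n^{-s}$. Integration over $y$ against the $\Gamma$-factors then localizes the problem and will ultimately produce the factor $\int |\Gamma((1/2+iy)/2)|^8\,dy$.

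Next, I would apply orthogonality of primitive even characters modulo $q$:
$$
\sumb_{\chiq} \chi(m)\bar\chi(n) = \tfrac{1}{2} \sum_{\substack{d \mid q \\ d \mid m\pm n}} \mu(q/d)\phi(d) \cdot \mathbf{1}_{(mn,q)=1}.
$$
This splits the sum into a \emph{diagonal} contribution ($m = \pm n$, corresponding to $d = q$) and an \emph{off-diagonal} contribution ($m \not\equiv \pm n \pmod q$ with $d < q$, or $m \equiv \pm n$ with $m \ne \pm n$). For the diagonal, the Dirichlet series
$$
\sum_{(n,q)=1} \frac{d_4(n)^2}{n^s} = \zeta_q(s)^{16}\prod_p B_p(s)
$$
has a pole of order $16$ at $s=1$; a contour shift past $s=1$ picks up a polynomial of degree $15$ in $\log q$, and the leading coefficient yields the arithmetic constant $24024\, a_4 \prod_{p\mid q}(\cdots)$ together with the factor $(\log q)^{16}/16!$. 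The $y$-integration collapses to $\int |\Gamma((1/2+iy)/2)|^8\,dy$ via standard manipulations of the AFE kernel.

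The real work lies in the off-diagonal. After the smooth average $\Psi(q/Q)$ over $q$, write $m = n + hq$ with $h \ne 0$ and apply Poisson summation in the $q$-variable (as in Conrey--Iwaniec--Soundararajan \cite{CIS}) to transform the sum over $q$ of $\Psi(q/Q)\cdot(\text{smooth weight in }q)$ into a dual sum. This swaps the congruence condition for a Kloosterman/Ramanujan-type sum and exposes savings from the $Q$-average. One is then left to bound a shifted convolution sum of $d_4 \times d_4$ type,
$$
\sum_{n} d_4(n) d_4(n+hq)\,W(n),
$$
weighted against smooth functions. The main obstacle is precisely this $d_4\times d_4$ correlation: it corresponds to a $\mathrm{GL}(4)\times\mathrm{GL}(4)$ shifted convolution, which is in general out of reach unconditionally. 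The strategy here is to reinterpret the off-diagonal, after Poisson, as an average of Dirichlet polynomials over moduli, then exploit (i) the additional averaging provided by $\Psi(q/Q)$, (ii) the rapid decay of the $\Gamma$-factor in $y$, which keeps $|y|$ effectively bounded, and (iii) GRH, which gives sharp upper bounds for short moments of Dirichlet $L$-functions with $d_4$-type coefficients via Soundararajan's method. Combining these with a large sieve inequality for primitive characters over the family $\{q\le Q\}$, one should be able to bound the off-diagonal by $O(Q^2(\log Q)^{15+\varepsilon})$, which is a factor of $\log Q$ smaller than the main term and establishes the claimed asymptotic.
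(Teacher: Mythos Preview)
Your outline has a genuine gap at the point where you claim the diagonal alone produces the constant $24024$ and the off-diagonal is $O(Q^2(\log Q)^{15+\varepsilon})$. Neither is true. The diagonal contribution is
\[
2\cdot 2^{16}\,a_4\sum_q \Psi\!\left(\frac{q}{Q}\right)\phi^\flat(q)\,\frac{(\log q)^{16}}{16!}\prod_{p\mid q}(\cdots)\int_{-\infty}^\infty\left|\Gamma\!\left(\tfrac{1/2+iy}{2}\right)\right|^8 dy,
\]
i.e.\ the constant $2\cdot 65536$, not $24024$. The off-diagonal terms are \emph{not} an error: they contribute a main term of the same order $Q^2(\log Q)^{16}$ with constant $2\cdot(-53524)$, and only the combination $2(65536-53524)=24024$ gives the predicted answer. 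So any argument that merely \emph{bounds} the off-diagonal by $O(Q^2(\log Q)^{15+\varepsilon})$ is doomed; you must \emph{evaluate} it asymptotically. Your proposed treatment --- Poisson in $q$ followed by upper bounds on a $d_4\times d_4$ shifted convolution via Soundararajan's method and the large sieve --- is aimed at producing a bound, not a main term, and therefore cannot succeed as written.

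The paper extracts the off-diagonal main term as follows. First, GRH is used not for the off-diagonal bound itself but for a \emph{truncation}: shifted-moment estimates (of the type in \cite{Faishifted}, \cite{Sound}) show that replacing the cutoff $V(m,n;q)$ by $V(m,n;q/(\log Q)^\alpha)$ costs only $O(Q^2(\log Q)^{15+\varepsilon})$, so effectively $m,n\le Q^2/(\log Q)^\alpha$. This is the essential input from GRH. With the polynomials shortened, the complementary divisor trick applies: writing $|M\pm N|=alh$ one switches from the congruence modulo $r$ (large) to the complementary modulus $h$ (small, since $m,n$ are short), then expands in characters modulo $abh$. The principal-character piece is a genuine main term, computed via a triple Mellin transform and a residue calculation giving the constant $-53524$; the non-principal piece is bounded using an averaged eighth moment over the small moduli $abh$. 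Your Poisson-in-$q$ idea is morally the same reciprocity, but without the prior GRH truncation the dual modulus is not small enough, and in any case you would still need to isolate and compute the principal-character (zero-frequency) contribution rather than bound everything.
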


\begin{rem}
The main term in the theorem is of the order $Q^2 (\log Q)^{16}$.
\end{rem}
Our method also allows us to compute the fourth moment of Dirichlet twists of a $GL(2)$ automorphic $L$-function.  To be precise, let $f$ be a holomorphic modular form of weight $k$ and full level\footnote{Our methods apply to other $GL(2)$ forms; we restrict our attention to this case to ease notation.}. We write
$$L(s, f) = \sum_{n} \frac{a(n)}{n^s} = \prod_p \left(1 - \frac{\alpha_p}{p^s} \right)^{-1}\left(1 - \frac{\beta_p}{p^s} \right)^{-1},
$$normalized so that the critical line is at $\tRe s = 1/2$.  Then the twisted $L$-function $L(s, f\times \chi)$ has Dirichlet series
$$L(s, f\times \chi) = \sum_n \frac{a(n)\chi(n)}{n^s},
$$and satisfies the functional equation
\begin{equation} \label{eqn:fncEqnoftwistedL}
\Lambda\left(\tfrac{1}{2} + s, f\times \chi\right) :=    \bfrac{q}{2\pi}^s \Gamma\left(s+ \tfrac{k}{2}\right) L\left(s + \tfrac{1}{2}, f\times \chi\right) = \omega_\chi \Lambda\left(\tfrac{1}{2} - s, f\times \cb\right), 
\end{equation}
where $\omega_\chi = i^k \frac{\tau(\chi)^2}{q}$.  Here, $\tau(\chi)$ is the Gauss sum and so $|\omega_\chi| = 1.$ Now let 
$$ f_2 := \prod_p \left(1 - \frac{a(p)^2 - 2}{p} + \frac{1}{p^2}\right)^{-3} \left(1 + \frac{a(p)^2 + 2}{p} - \frac{4a(p)^2 - 2}{p^2} + \frac{a(p)^2 + 2 }{p^3} + \frac{1}{p^4}  \right),$$
and $B_p(f, 1/2)$ be defined by
\begin{equation}\label{def:Bpf}
 \left(1 - \frac{1}{p} \right)^{-4} \left(1 - \frac{1}{p^2} \right)\left(1 - \frac{a(p)^2 - 2}{p} + \frac{1}{p^2}\right)^{-3} \left(1 + \frac{a(p)^2 + 2}{p} - \frac{4a(p)^2 - 2}{p^2} + \frac{a(p)^2 + 2 }{p^3} + \frac{1}{p^4}  \right).
\end{equation}

Then we have the following
\begin{thm} \label{thm:twisted4moment}
Assume GRH. Let $\Psi$ be a smooth function compactly supported in [1, 2]. Then, we have
\begin{align*}
 \sum_{q } \Psi\left(\frac{q}{Q}\right) &\sumstar_{\chiq} \int_{-\infty}^{\infty} \left| \Lambda\big( \tfrac{1}{2} + iy, f \times \chi \big)\right|^4 \> dy \\
&= \frac{1}{2\pi^2}\ f_2\sum_{q}  \Psi\left(\frac{q}{Q}\right)  \phi^*(q) (2\log q)^{4} \prod_{p|q} \frac{1}{B_p(f,1/2)} \int_{-\infty}^{\infty} \left| \Gamma \left( \frac{k/2 + iy}{2}\right)\right|^4 \> dy \\
& \hskip 2in + O(Q^2(\log Q)^{3+ \epsilon}). 
\end{align*}
\end{thm}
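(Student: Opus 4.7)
The plan is to mirror the proof of Theorem \ref{thm:eightmoment} with adjustments for the Hecke coefficients $a(n)$. Since $L(s,f\times\chi)$ is a degree-two $L$-function, the fourth moment here plays structurally the role of the eighth moment in Theorem \ref{thm:eightmoment}, and the expected main term is a polynomial in $\log q$ of degree $4=2^{2}$, with the constants $f_2$ and $B_p(f,1/2)$ encoding the arithmetic of $f$ via Rankin--Selberg factorisation.

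First I would apply an approximate functional equation built from (\ref{eqn:fncEqnoftwistedL}) to express $\Lambda(\tfrac12+iy,f\times\chi)$ as a smooth Dirichlet polynomial in $a(n)\chi(n)$ of effective length $\sim q$, plus its functional-equation counterpart involving $a(n)\overline\chi(n)$. Expanding $\bigl|\Lambda(\tfrac12+iy,f\times\chi)\bigr|^{4}$ then yields a quadruple sum over $(m_1,m_2,n_1,n_2)$ with coefficients $a(m_1)a(m_2)\overline{a(n_1)\,a(n_2)}$, character factor $\chi(m_1m_2)\overline\chi(n_1n_2)$, and a smooth weight depending on $y$ and $q$; the integration in $y$ collapses the $\Gamma$-factors into the $|\Gamma(\tfrac{k/2+iy}{2})|^{4}$ integral appearing in the statement.

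Next I would truncate, using GRH, each Dirichlet polynomial to length below $q$, exactly as in Section \ref{sec:truncation}. The required input is an upper bound on shifted second moments of $L(s,f\times\chi)$, available on GRH by the Soundararajan method \cite{Sound} as adapted to $GL(2)$ L-functions in \cite{SY}. With the support shortened, inclusion--exclusion on the conductor reduces the primitive-character orthogonality to a sum over congruences $m_1m_2\equiv n_1n_2\pmod q$. The diagonal $m_1m_2=n_1n_2$ is handled via the Hecke relation $a(m)a(n)=\sum_{d\mid(m,n)}a(mn/d^{2})$, which identifies the relevant generating series with $\zeta(s)L(s,\operatorname{sym}^{2} f)$-type $L$-functions. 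A Mellin-transform contour argument in the spirit of \cite{CIS} then extracts the main term $(\log q)^{4}$ with the leading constant $\tfrac{1}{2\pi^{2}}f_{2}$, while the Euler factors at primes $p\mid q$ produce $1/B_p(f,1/2)$.

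The main obstacle is the off-diagonal sum, namely the terms with $m_1m_2\neq n_1n_2$ but $m_1m_2\equiv n_1n_2\pmod q$. Following Section \ref{secG}.1, I would switch to the complementary divisor by writing $m_1m_2-n_1n_2=q\ell$ with $\ell\neq 0$ and applying Mellin transforms to sum smoothly in $q$. The resulting expression must then be controlled using Deligne's bound $|a(n)|\ll d(n)$, the Rankin--Selberg estimate $\sum_{n\le x}a(n)^{2}\ll x$, and once more GRH-based shifted moment bounds. The target error $O(Q^{2}(\log Q)^{3+\epsilon})$ is tight in the sense that the diagonal contributes at size $Q^{2}(\log Q)^{4}$, so one must save only a logarithmic factor; keeping the many resulting contour integrals and local factor computations under control in the presence of Hecke eigenvalues is the technically delicate part.
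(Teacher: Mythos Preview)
Your overall strategy matches the paper's: approximate functional equation from (\ref{eqn:fncEqnoftwistedL}), GRH-based truncation via Proposition~\ref{thm:twisted4shiftedmoment}, orthogonality, diagonal/off-diagonal split, and the complementary divisor on the off-diagonal. The diagonal is evaluated via Lemma~\ref{lem:eulerprodfortwistedfourth} and supplies the full main term.

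Where you diverge from the paper is in your assessment of the off-diagonal. You call it ``the main obstacle'' and ``technically delicate''; the paper observes that it is in fact \emph{easier} here than in Theorem~\ref{thm:eightmoment} and contributes no main term at all. The key point you have not isolated is this: in the Dirichlet case, expressing the congruence via characters and separating out the principal character produces powers of $L(s,\chi_0)$, which inherit the pole of $\zeta(s)$ at $s=1$ and thereby generate the off-diagonal main terms $\mathcal{MS}$ and $\mathcal{MG}$ of Sections~\ref{sec:calS}--\ref{secG}. For the $GL(2)$ twist, the principal character instead produces powers of $L(s,f\times\chi_0)$, which is \emph{entire}; shifting contours picks up no residue, and the principal-character contribution can be bounded exactly as the non-principal ones via Proposition~\ref{thm:4twistedmomentIntandSumoverq}. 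Consequently there is no analogue of Sections~\ref{sec:MsplusMg}--\ref{sec:offdiagmain} to carry out. Your plan would still succeed---if you ran the full machinery you would simply find that the off-diagonal residues vanish because $L(\tfrac12+s,f)$ has no pole at $s=\tfrac12$---but recognizing the entireness of $L(s,f\times\chi_0)$ upfront is precisely why the $GL(2)$ case is shorter, not longer, than Theorem~\ref{thm:eightmoment}.
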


\begin{rem}
The main term in the theorem is of the order $Q^2 (\log Q)^4$.
\end{rem}

This is consistent with the conjectures for the fourth moment of a unitary family of $L$-functions, as in \cite{CFKRS}.  The proof of Theorem \ref{thm:twisted4moment} is similar to that of Theorem \ref{thm:eightmoment}.  In the sequel, we will focus on proving Theorem \ref{thm:eightmoment} and indicate how to modify the proof for Theorem \ref{thm:twisted4moment} in Section \ref{sec:twistedfourthmoment}.

\subsection{A sketch of the proof}
As the proof contains a number of technical details, we first provide a sketch which will hopefully indicate the main features.  Roughly speaking, after applying the approximate functional equation, we need to understand sums of the form
$$Q\sum_{q} \Psi\bfrac{q}{Q} \sum_{m \leq Q^2} \sum_{\substack{ n \leq Q^2 \\ m\equiv n \textup{(mod $q$)}}} \frac{\tau_4(m)\tau_4(n)}{\sqrt{mn}},
$$
where $\tau_4(n) = \sum_{n = n_1n_2n_2n_4} 1.$ The diagonal contribution $m=n$ may be extracted and is fairly easy to understand.  In the case of the sixth moment (and in their study of the asymptotic large sieve), Conrey, Iwaniec and Soundararajan \cite{CIS} use the complementary divisor trick, to write $m - n = hq$ and replace the congruence condition modulo $q$ with a congruence condition modulo $h$.  In the case of the sixth moment, the critical range is $m\asymp n \asymp Q^{3/2}$ and since $h$ is of size $|m-n|/Q$, this results in a massive reduction in conductor in the work \cite{CIS}.  However, in our case, the critical range is $m\asymp n \asymp Q^2$, so that applying the complementary divisor trick immediately is futile.  Thus before switching to the complementary divisor, we truncate our sums in $m$ and $n$ to $Q^{2-\epsilon}$, directly using the multiplicative large sieve (see Section \ref{sec:truncation}).  

After switching to the complementary divisor, we express the congruence condition modulo $h$ using characters modulo $h$, so that roughly, we want to study
$$Q\sum_{h} \frac{1}{\phi(h)}\sum_{\chi \textup{(mod $h$)}}\sum_{m \leq Q^{2-\epsilon}} \sum_{\substack{ n \leq Q^{2-\epsilon}}} \frac{\tau_4(m)\tau_4(n)\chi(m) \overline{\chi(n)}}{\sqrt{mn}}\Psi\bfrac{|m-n|}{hQ}\frac{|m-n|}{hQ}V(m, n) .
$$In the above, the principal characters give a main term contribution while the rest should be absorbed into the error term.  If the essentially smooth factor $\Psi\bfrac{|m-n|}{hQ}\frac{|m-n|}{hQ}V(m, n)$ did not exist, one may separate $m$ and $n$ and apply the large sieve again to give a satisfactory bound.  Unfortunately, separating the variables $m$ and $n$ using Mellin transforms (see \S \ref{sec:Mellin}) comes at a high cost.  To be more precise, note that the $k$th derivative of $\Psi\bfrac{|m-n|}{hQ}$ with respect to $m$ is of size $\bfrac{1}{hQ}^k$.  In order for the Mellin transform to decay rapidly we would like this to be $\bfrac{1}{m}^k$, which is only true when the complementary divisor $h$ is of size around $Q^{1-\epsilon}$ (the reader should think of $m$ being around size $Q^{2-\epsilon}$ as this is the critical range).  It is impossible to ignore the smaller values of $h$; for instance, they genuinely contribute to the main term.  Technically, GRH is used to bound an eighth moment of Dirichlet $L$-functions in the $t$-aspect, which appears since the Mellin transform does not decay rapidly.  Less technically, GRH guarantees significant cancellation in the sums over $m$ and $n$ even in short intervals - visibly, for fixed $n$, the appearance of $\Psi\bfrac{|m-n|}{hQ}$ restricts $m$ to an interval much shorter than $Q^{2-\epsilon}$ when $h$ is small.  

In this description, we have neglected to mention the effect of the inclusion-exclusion within the orthogonality over primitive characters, a number of coprimality conditions, and a rather lengthy list of technical manipulations.

\section{Notation and preliminary lemmas}
In this section, we introduce some notation and record some standard results.  Define
$$ \Lambda(s, \chi; t) := \Lambda^4(s + it, \chi) \Lambda^4(s - it, \overline\chi),$$
and
$$G(s, t) := \Gamma^4\left(\tfrac{s}{2} + \tfrac{it}{2}\right)\Gamma^4\left(\tfrac{s}{2}-\tfrac{it}{2}\right).$$
We then have
$$ \Lambda\left(\tfrac{1}{2}, \chi ; t\right) = G\left(\tfrac 12, t\right) L^4 \left( \tfrac{1}{2} + it, \chi\right)L^4\left(\tfrac{1}{2} - it,  \cb \right).$$
Therefore for $ \tRe (s) > 1,$ we write
$$  L^4 \left( s + it, \chi\right)L^4\left(s - it,  \cb \right) = \sum_{m, n = 1}^\infty \frac{\tau_4 (m) \tau_4 (n)}{m^s n^s} \chi(m) \cb(n) \left( \frac{n}{m}\right)^{it}.$$   

Moreover, define
\begin{equation}
\label{eqnW}
W(x, t) := \frac{1}{2\pi i} \int_{(1)} G(1/2+s, t) x^{-s} \frac{ds}{s},
\end{equation}

$$P(\chi , t) := \sum_{m, n=1}^\infty \frac{\tau_4(m) \tau_4(n)\chi(m)\cb(n)}{\sqrt{mn}}W\left(\frac{mn\pi^4}{q^4}, t\right),
$$
\begin{equation}
\label{eqnV}
 V(\xi, \eta; \mu) = \int_{-\infty}^{\infty} \left(\frac{\eta}{\xi}\right)^{it} W\left( \frac{\xi\eta \pi^4}{\mu^4}, t\right) \> dt,
\end{equation}
and 
$$ \Lambda_1(\chi) = \sum_{m, n = 1}^{\infty} \frac{\tau_4(m) \tau_4(n)}{\sqrt{mn}} \chi(m) \overline{\chi}(n) V(m, n ;q).$$
Our first Lemma is an approximate functional equation.
\begin{lemma} \label{prop:fnceqnLambda} With notation as above, we have
\begin{equation}
\label{eqn:appfe}
\Lambda(1/2, \chi; t) = 2P(\chi, t),
\end{equation}
and
\begin{equation}
\label{eqn:appfet}
\int_{-\infty}^{\infty} \Lambda(1/2, \chi ; t) \> dt = 2\Lambda_1(\chi).
\end{equation}

\end{lemma}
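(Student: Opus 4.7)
The plan is to apply a standard Mellin contour shift to reinterpret $\Lambda(1/2, \chi; t)$ as a residue and then use the functional equation to convert the remaining integral back into the same object (up to sign). Set
\[
I(t) \assign \frac{1}{2\pi i}\int_{(1)} \Lambda\bigl(\tfrac{1}{2} + s, \chi; t\bigr)\, \frac{ds}{s}.
\]
On $\tRe s = 1$, expanding the completed $L$-functions as
\[
\Lambda\bigl(\tfrac{1}{2}+s, \chi; t\bigr) = (q/\pi)^{4s}\, G\bigl(\tfrac{1}{2}+s, t\bigr)\, L^4\bigl(\tfrac{1}{2}+s+it, \chi\bigr)\, L^4\bigl(\tfrac{1}{2}+s-it, \cb\bigr),
\]
and using the absolutely convergent Dirichlet series
\[
L^4\bigl(\tfrac{1}{2}+s+it, \chi\bigr)\, L^4\bigl(\tfrac{1}{2}+s-it, \cb\bigr) = \sum_{m,n \ge 1} \frac{\tau_4(m)\tau_4(n)\chi(m)\cb(n)}{(mn)^{1/2+s}} \Bigl(\frac{n}{m}\Bigr)^{it},
\]
swapping summation with the contour integral and recognising \eqref{eqnW} inside, one immediately obtains $I(t) = P(\chi,t)$ (reading the summand of $P$ with its natural $(n/m)^{it}$ twist).

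Next I would shift the contour from $\tRe s = 1$ to $\tRe s = -1$; the only pole of the integrand inside is a simple pole at $s=0$, with residue $\Lambda(\tfrac{1}{2},\chi;t)$, and the shift is legal because Stirling's formula applied to both $\Gamma$-factors defining $G(1/2+s,t)$ gives rapid decay on vertical lines.  On the shifted line I would substitute $s \mapsto -s$ and apply the functional equation $\Lambda(\tfrac{1}{2}+s\pm it,\chi) = \epsilon_\chi \Lambda(\tfrac{1}{2}-s\mp it,\cb)$ to each of the four-fold factors; because $\epsilon_\chi\epsilon_{\cb} = 1$ for primitive characters, the eight factors of $\epsilon$ collapse and one obtains the symmetry
\[
\Lambda\bigl(\tfrac{1}{2}+s, \chi; t\bigr) = \Lambda\bigl(\tfrac{1}{2} - s, \chi; t\bigr),
\]
so the integral on $\tRe s=-1$ equals $-I(t)$.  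Combining with the residue produces $2I(t) = \Lambda(\tfrac{1}{2},\chi;t)$, proving \eqref{eqn:appfe}.

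For \eqref{eqn:appfet} I would simply integrate the identity \eqref{eqn:appfe} over $t \in \mathbb{R}$; the double exponential decay of $G(1/2+s,t)$ in $|t|$ makes the interchange of the $t$-integration with the sum over $m,n$ trivial, and by definition \eqref{eqnV},
\[
\int_{-\infty}^{\infty} \Bigl(\frac{n}{m}\Bigr)^{it} W\!\left(\frac{mn\pi^4}{q^4}, t\right) dt = V(m,n;q),
\]
which rearranges exactly into $2\Lambda_1(\chi)$.  There is no substantive obstacle here; the only thing to check carefully is the symmetry $\Lambda(\tfrac{1}{2}+s,\chi;t) = \Lambda(\tfrac{1}{2}-s,\chi;t)$, which is a bookkeeping exercise in the root numbers once one notes $\epsilon_\chi\epsilon_{\cb}=1$.
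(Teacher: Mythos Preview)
Your proposal is correct and follows essentially the same contour-shift argument as the paper's proof: write $P(\chi,t)$ as $\frac{1}{2\pi i}\int_{(1)}\Lambda(1/2+s,\chi;t)\,\frac{ds}{s}$, shift to $\tRe s=-1$ picking up the residue $\Lambda(1/2,\chi;t)$, and use the functional equation to identify the remaining integral as $-P(\chi,t)$; then integrate in $t$. You have supplied more justification (Stirling decay, root-number cancellation, the $(n/m)^{it}$ twist that the paper's stated definition of $P$ suppresses) than the paper does, but the method is identical.
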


\begin{proof} 
We begin by writing
$$P(\chi, t) = \frac{1}{2\pi i} \int_{(1)} \Lambda(1/2+s, \chi; t) \frac{ds}{s}.
$$Shifting contours of integration to $\tRe s = -1$, we pass a simple pole at $s=0$ with residue $\Lambda(1/2, \chi; t)$.  Upon applying the functional equation, the remaining integral is
$$
\frac{1}{2\pi i} \int_{(-1)} \Lambda(1/2-s, \chi; t) \frac{ds}{s}
=-\frac{1}{2\pi i} \int_{(1)} \Lambda(1/2+s, \chi; t) \frac{ds}{s}
=-P(\chi, t).
$$This proves (\ref{eqn:appfe}).  Integrating both sides of (\ref{eqn:appfe}) gives (\ref{eqn:appfet}).
\end{proof}

The integration of $\Lambda(1/2, \chi ; t)$ in $t$ is needed to restrict the range of $m, n$. From the following lemma, we see that the main term of $\Lambda_1(\chi)$ comes from when $m, n$ are both at most $q^{2 + \epsilon}.$
\begin{lemma} \label{lem:weightWV} The weight function $W(x, t)$ defined in (\ref{eqnW}) is a smooth function of $x \in (0, \infty)$ and moreover, for any $x > 1$ and any non-negative integer $\nu$, 
$$ W^{(\nu)}(x, t) \ll_{\nu} \exp(-c_0 x^{1/4})$$
for some absolute constant $c_0 > 0.$  This in term gives us that the function $V$ defined in (\ref{eqnV}) satisfies
$$ V(\xi, \eta; \mu) \ll \exp \left(-c_1 \left(\frac{\max(\xi, \eta)^2}{\mu^4} \right) \right)^{1/4}.$$ 
\end{lemma}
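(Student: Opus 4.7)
The plan is to prove the two bounds separately.

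For the bound on $W^{(\nu)}(x, t)$, I would shift the contour in \eqref{eqnW} to $\tRe s = \sigma$ for a positive $\sigma$ to be chosen (no poles are crossed, since the integrand is holomorphic for $\tRe s > 0$). Differentiating $x^{-s}$ in $x$ contributes only polynomial factors in $s$, which are harmless. On the shifted line, Stirling's formula yields
$$ |\Gamma(\tfrac{1}{4}+\tfrac{\sigma}{2}+i(u \pm t)/2)|^4 \ll (\sigma+|u \pm t|+1)^{2\sigma-1}e^{-\pi |u \pm t|}, $$
so that
$$ |G(1/2+\sigma+iu, t)| \ll (\sigma+|u|+|t|+1)^{4\sigma-2}\,e^{-2\pi\max(|u|, |t|)}, $$
using the identity $|u+t|+|u-t| = 2\max(|u|, |t|)$. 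Integrating in $u$ (splitting $|u| \leq |t|$ and $|u| > |t|$) and applying Stirling to $\Gamma(4\sigma)$, the bound on $|W^{(\nu)}(x, t)|$ takes the schematic form $x^{-\sigma-\nu}\, C^\sigma \sigma^{4\sigma} e^{-4\sigma}$ times polynomial factors. Choosing $\sigma = c_0\, x^{1/4}$ for a suitable small $c_0 > 0$ then yields $W^{(\nu)}(x, t) \ll_\nu \exp(-c_0 x^{1/4})$ uniformly in $t$.

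For the bound on $V(\xi, \eta; \mu)$, set $L = \log(\xi/\eta)$ so that $(\eta/\xi)^{it} = e^{-iLt}$. If $\xi \asymp \eta$, then $\max(\xi, \eta)^{1/2} \asymp (\xi\eta)^{1/4}$, and substituting the $W$-bound from Part 1 into \eqref{eqnV} (using that the Stirling analysis above also gives enough $t$-decay of $W$ for integrability) yields the claim directly. If instead $L$ is bounded away from $0$, assume WLOG $\xi/\eta \geq 2$ and integrate by parts $N$ times in $t$:
$$ V(\xi, \eta; \mu) = \frac{1}{(iL)^N}\int_{-\infty}^\infty e^{-iLt}\,\partial_t^N W(\xi\eta\pi^4/\mu^4, t) \, dt. $$
Differentiating $G(1/2+s, t)$ in $t$ produces factors involving $\psi(\tfrac{1/2+s+it}{2}) - \psi(\tfrac{1/2+s-it}{2})$, which are $O(1)$ uniformly on the shifted contour by Stirling for the digamma function. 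Iterating gives $|\partial_t^N W(x, t)| \ll C^N |W(x, t)|$, and choosing $N$ appropriately (roughly proportional to $\max(\xi, \eta)^{1/2}/(\mu \log L)$) gives the stated decay after combining with the $W$-bound.

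I expect the main technical obstacle to lie in Part 1: carefully balancing the polynomial, factorial-type, and exponential factors in the Stirling estimate against $x^{-\sigma}$ so that the optimal choice $\sigma \asymp x^{1/4}$ emerges cleanly. Part 2 is bookkeeping-heavy but standard, with the additional analytic input being the uniform boundedness of $\psi(a+ib) - \psi(a-ib)$, which again follows from Stirling for $\psi$.
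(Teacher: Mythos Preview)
Your treatment of $W^{(\nu)}(x,t)$ is correct and standard: shift to $\tRe s=\sigma$, apply Stirling uniformly, and optimise $\sigma\asymp x^{1/4}$. The case $\xi\asymp\eta$ of the $V$-bound is also fine. (The paper itself gives no argument here, referring instead to Lemma~1 of \cite{CIS}.)

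The gap is in your integration-by-parts argument for $\xi/\eta$ large. You assert that $\psi\bigl(\tfrac{1/2+s+it}{2}\bigr)-\psi\bigl(\tfrac{1/2+s-it}{2}\bigr)$ is $O(1)$ uniformly on the shifted contour, invoking the boundedness of $\psi(a+ib)-\psi(a-ib)$. But on the line $s=\sigma+iu$ the two arguments are $\tfrac14+\tfrac{\sigma}{2}+i\tfrac{u+t}{2}$ and $\tfrac14+\tfrac{\sigma}{2}+i\tfrac{u-t}{2}$, which are \emph{not} complex conjugates once $u\neq 0$. Their digamma difference equals $\log(z_+/z_-)+O(|z_+|^{-1}+|z_-|^{-1})$, and the real part $\tfrac12\log\bigl(|z_+|^2/|z_-|^2\bigr)$ is unbounded (take $u=t$, $|t|\gg\sigma$). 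Each $t$-derivative therefore costs a factor of size roughly $\log\bigl(1+(|u|+|t|)/\sigma\bigr)$ rather than $O(1)$. Since you must take $N$ of order $\xi^{1/2}/\mu$, while $\sigma\asymp(\xi\eta)^{1/4}/\mu$ can be much smaller when $\xi/\eta$ is large, these logarithmic losses are not absorbed by the exponential decay of $G$, and the optimisation breaks down. (A secondary issue: even if the $O(1)$ claim held, the resulting constant $C$ would exceed $\log 2$, so the threshold $\xi/\eta\ge 2$ would not make $C/L<1$.)

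The clean fix---and essentially what the argument in \cite{CIS} does---is to shift the $t$-contour rather than integrate by parts. Write $V$ as a double integral in $s$ and $t$, keep $\tRe s=\sigma$, and move $t$ to $\tIm t=-\rho$ with $0<\rho<\sigma+\tfrac12$ (assuming $\xi\ge\eta$). This gains a factor $(\eta/\xi)^{\rho}$; taking $\rho=\sigma$ converts $(\xi\eta)^{-\sigma}(\eta/\xi)^{\sigma}$ into $\xi^{-2\sigma}$, while the Gamma factors now have real parts $\sigma+\tfrac14$ and $\tfrac14$ and are handled by the same Stirling estimates as in Part~1. Optimising $\sigma\asymp(\xi^{2}/\mu^{4})^{1/4}=\xi^{1/2}/\mu$ then yields $V\ll\exp\bigl(-c_1(\max(\xi,\eta)^2/\mu^4)^{1/4}\bigr)$ directly, with no case distinction.
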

\begin{proof} This lemma is the same as Lemma 1 in \cite{CIS}.  
\end{proof}

Next, we introduce some notation which will be used when calculating the arithmetic factor $a_4$ in (\ref{eqn:arithfactor}). 
Let $$ \B_p(s) = \sum_{r = 0}^{\infty} \frac{\tau_4^2(p^r)}{p^{2rs}}, \ \ \ \ \  \ \ \ \  \B_q(s) = \prod_{p|q} B_p(s),$$
and $$ A_p(s) =  \left(1 - \frac{1}{p^{2s}} \right)^{16} B_p(s) , \ \ \ \ \ \ \ \ A(s) = \prod_p A_p(s).$$ 
\begin{lemma} \label{lem:eulerprod} With notation as above, we have that for $\tRe (s) > 1/2$,
$$B_p(s) = \left( 1 - \frac{1}{p^{2s}}\right)^{-7}\left(1 + \frac{9}{p^{2s}} + \frac{9}{p^{4s}} + \frac{1}{p^{6s}}\right),$$
and
$$ \sum_{\substack{n = 1 \\ (n,q) = 1}}^{\infty} \frac{\tau_4^2(n)}{n^{2s}} =  \frac{\zeta^{16}(2s) A(s)}{\mathcal B_q(s)} ,$$
where $A(s)$ converges absolutely when $\tRe (s) > 1/4.$

\end{lemma}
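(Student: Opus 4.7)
The first identity is an explicit local Dirichlet series computation, and the second is a routine factorisation argument, so I would proceed as follows.

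For the formula for $B_p(s)$, I would write $\tau_4(p^r) = \binom{r+3}{3}$ and set $x = p^{-2s}$, so that the claim becomes the generating function identity
$$\sum_{r\geq 0} \binom{r+3}{3}^2 x^r = \frac{1 + 9x + 9x^2 + x^3}{(1-x)^7}.$$
I would verify this by multiplying both sides by $(1-x)^7$ and comparing coefficients: using $(1-x)^{-7} = \sum_{r\geq 0}\binom{r+6}{6}x^r$, the coefficient of $x^r$ on the right is
$$\binom{r+6}{6} + 9\binom{r+5}{6} + 9\binom{r+4}{6} + \binom{r+3}{6},$$
and a short binomial identity (or direct checking at $r=0,1,2,3$ followed by matching the polynomial degree in $r$ on both sides, which is $6$) shows that this equals $\binom{r+3}{3}^2$. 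Alternatively, one can recognise the right-hand side from the general Rankin–Selberg Euler factor for $L(s,\tau_k \times \tau_k)$ specialised at $k=4$.

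For the Dirichlet series, I would multiply out the local factors: since $\tau_4^2$ is multiplicative,
$$\sum_{(n,q)=1} \frac{\tau_4^2(n)}{n^{2s}} = \prod_{p \nmid q} B_p(s) = \frac{\prod_p B_p(s)}{\prod_{p|q}B_p(s)} = \frac{\prod_p B_p(s)}{\mathcal{B}_q(s)}.$$
Using $\zeta(2s)^{16} = \prod_p (1 - p^{-2s})^{-16}$, I would then write $\prod_p B_p(s) = \zeta(2s)^{16} \prod_p A_p(s) = \zeta(2s)^{16} A(s)$, which gives the stated formula.

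For the convergence claim, I would expand $A_p(s) = (1-p^{-2s})^9 (1 + 9p^{-2s} + 9p^{-4s} + p^{-6s})$ in powers of $y = p^{-2s}$: the coefficient of $y$ is $9 - 9 = 0$, so $A_p(s) = 1 + O(p^{-4s})$. Thus $\log A_p(s) \ll p^{-4 \R s}$, and $\sum_p \log A_p(s)$ converges absolutely for $\R s > 1/4$, giving absolute convergence of $A(s)$ in that half-plane. No real obstacle arises; the only mildly delicate step is checking the polynomial identity for $B_p(s)$, and the key structural point is that the $p^{-2s}$-coefficient of $A_p(s)-1$ vanishes, which is what pushes convergence all the way down to $\R s > 1/4$.
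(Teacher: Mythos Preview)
Your proposal is correct and follows precisely the approach the paper has in mind: the paper's own proof consists of the single sentence ``This is a standard proof which involves writing out both sides in terms of Euler products,'' together with a reference to \cite{CFKRS}, and you have supplied exactly those details. One very minor caveat: in your parenthetical verification of the polynomial identity, checking the values $r=0,1,2,3$ together with the fact that both sides have degree $6$ in $r$ is not by itself sufficient (two degree-$6$ polynomials agreeing at four points need not coincide); you would need seven values, or else appeal directly to a binomial identity or your Rankin--Selberg observation---but the identity is of course correct and your other suggested routes establish it.
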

\begin{proof} This is a standard proof which involves writing out both sides in terms of Euler products.  See Section 2 of \cite{CFKRS} for similar proofs.
\end{proof}

The next lemma is required to compute the main term of the fourth moment of Dirichlet twists of a $GL(2)$ automorphic $L$-function. We define 
 $$ \B_p(f, s) = \sum_{r = 0}^{\infty} \frac{\sigma^2_f(p^r)}{p^{2rs}}, \ \ \ \ \  \ \ \ \  \B_q(f, s) = \prod_{p|q} B_p(f,s),$$
and $$ A(f, s) =  \prod_p \left(1 - \frac{1}{p^{2s}}\right)^{4} B_p(f,s).$$

\begin{lemma} \label{lem:eulerprodfortwistedfourth} With notation as above, we have for $\tRe (s) > 1/2,$ 
\begin{equation} \label{eqn:sumtozeta}
\sum_{\substack{n = 1 \\ (n,q) = 1}}^{\infty} \frac{\sigma^2_f(n)}{n^{2s}} =  \frac{\zeta^{4}(2s) A(f,s)}{\mathcal B_q(f,s)} ,
\end{equation}
where $B_p(f, 1/2)$ is as in (\ref{def:Bpf}) and 
$$ A_p(f, 1/2) = \frac{6}{\pi^2}\prod_p \left(1 - \frac{a(p)^2 - 2}{p} + \frac{1}{p^2}\right)^{-3} \left(1 + \frac{a(p)^2 + 2}{p} - \frac{4a(p)^2 + 2}{p^2} + \frac{a(p)^2 + 2}{p^3} + \frac{1}{p^4}  \right),$$
converges absolutely when $\tRe (s) > 1/4.$
\end{lemma}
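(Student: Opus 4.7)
The plan is to proceed in three steps: a formal reduction of the Dirichlet series, an explicit computation of the Euler factor $\B_p(f,s)$ at $s=1/2$, and a convergence verification. For the first step, since $a$ is multiplicative and $\sigma_f = a \ast a$ (so that $L(s,f)^2 = \sum_n \sigma_f(n)/n^s$), the function $\sigma_f^2$ is multiplicative. Hence
\[
\sum_{\substack{n \geq 1 \\ (n,q) = 1}} \frac{\sigma_f^2(n)}{n^{2s}} \;=\; \prod_{p \nmid q} \B_p(f,s) \;=\; \frac{\prod_p \B_p(f,s)}{\B_q(f,s)},
\]
and \eqref{eqn:sumtozeta} follows at once from the definitions $A(f,s) = \prod_p(1-p^{-2s})^4 \B_p(f,s)$ together with $\zeta^4(2s) = \prod_p (1-p^{-2s})^{-4}$. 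This part is purely algebraic.

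For the second step, I compute $\B_p(f,s)$ explicitly using the Satake parameters $\alpha_p, \beta_p$, which satisfy $\alpha_p \beta_p = 1$ (trivial central character) and $\alpha_p + \beta_p = a(p)$. We have
\[
\sum_{r \geq 0} \sigma_f(p^r)\, Y^r \;=\; [(1-\alpha_p Y)(1-\beta_p Y)]^{-2} \;=\; (1 - a(p) Y + Y^2)^{-2}.
\]
To evaluate $\sum_r \sigma_f(p^r)^2 Y^r$, I would either form the Hadamard square of this generating function by a contour integral, or invoke the Rankin--Selberg factorization for the isobaric sum $F = f \boxplus f$ on $GL(4)$: $L(s, F\times F) = L(s, f\times f)^4 = \zeta(s)^4 L(s, \tmop{sym}^2 f)^4$, and match local Euler factors. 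Either approach yields a rational function in $Y$ whose denominator is $(1-Y)^8 \bigl(1 - (a(p)^2 - 2) Y + Y^2\bigr)^4$. Specializing to $s = 1/2$ (so $Y = 1/p$) and simplifying the resulting numerator, using $\alpha_p + \beta_p = a(p)$ and $\alpha_p \beta_p = 1$, reproduces \eqref{def:Bpf} exactly. The explicit form of $A(f, 1/2)$ then follows from $A_p(f, 1/2) = (1-1/p)^4 \B_p(f, 1/2)$, with the universal factor $\prod_p(1-1/p^2) = 6/\pi^2$ arising naturally.

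For the third step, convergence of $A(f,s)$ in $\tRe s > 1/4$ follows from a term-by-term expansion. The normalization $(1-1/p^{2s})^4$ is engineered to cancel the dominant contributions arising from the $\zeta(2s)^4$ portion of the Rankin--Selberg $L(2s, F\times F)$, and once Deligne's bound $|a(p)| \leq 2$ is applied to the remaining $L(2s, \tmop{sym}^2 f)^4$ contributions, one obtains $A_p(f,s) = 1 + O_f(p^{-4 \tRe s})$ uniformly in $p$. Summing over primes then gives absolute convergence in the claimed region.

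The principal obstacle is the algebraic simplification in the second step: one must carefully verify that the rational function produced by the Rankin--Selberg computation, after specialization to $s = 1/2$ and expansion of the numerator, matches the exact polynomial expression in \eqref{def:Bpf}. This is routine but tedious bookkeeping; the analogous calculation for the $\tau_4$ local factor appears in Section 2 of \cite{CFKRS} and serves as a direct template for the present one.
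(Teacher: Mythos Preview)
Your Steps~1 and~2 are sound, and Option~A in Step~2---the Hadamard square computed as a contour integral---is exactly the paper's route. The paper writes
\[
\B_p(f,\tfrac12)=\int_0^1 \prod_{\pm}\Bigl[\bigl(1-\alpha_p e(\pm\theta)p^{-1/2}\bigr)\bigl(1-\beta_p e(\pm\theta)p^{-1/2}\bigr)\Bigr]^{-2}\,d\theta,
\]
converts this to a contour integral over $|z|=1$, and picks up the residues at $z=\alpha_p/\sqrt p$ and $z=\beta_p/\sqrt p$; the relation $\alpha_p\beta_p=1$ then yields the stated closed form for $\B_p(f,1/2)$.

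Step~3, however, contains a slip. The factor $(1-p^{-2s})^4$ cancels only the $\zeta(2s)^4$ part of your Rankin--Selberg factorization; the residual $L(2s,\operatorname{sym}^2 f)^4$ still contributes at order $p^{-2s}$. Concretely, since $\sigma_f(p)=2a(p)$ one has
\[
A_p(f,s)=(1-p^{-2s})^4\bigl(1+4a(p)^2p^{-2s}+\cdots\bigr)=1+4\bigl(a(p)^2-1\bigr)p^{-2s}+O\bigl(p^{-4\operatorname{Re}s}\bigr),
\]
and Deligne's bound does not make $a(p)^2-1$ vanish. Hence the Euler product for $A(f,s)$ is absolutely convergent only for $\operatorname{Re}s>1/2$, not $1/4$; the $1/4$ in the lemma statement appears to be inherited from the $\tau_4$ case of Lemma~\ref{lem:eulerprod}, where the first-order term really does cancel. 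If you need analyticity past $\operatorname{Re}s=1/2$ for the contour shift in the diagonal computation, extract the factor $L(2s,\operatorname{sym}^2 f)^4$ explicitly: the leftover Euler product then genuinely satisfies $1+O(p^{-4\operatorname{Re}s})$ and converges absolutely for $\operatorname{Re}s>1/4$, as you intended.
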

\begin{proof} Let $\oint$ denote the contour integral around the unit circle.  The proof of (\ref{eqn:sumtozeta}) can be found in \cite{CFKRS}, where $B_p(f,1/2)$ is given by
\begin{align*}
B_p(f, 1/2) &= \int_0^1 \left( 1 - \frac{\alpha_p e(\theta)}{p^{1/2}}\right)^{-2}\left( 1 - \frac{\beta_p e(\theta)}{p^{1/2}}\right)^{-2}\left( 1 - \frac{\alpha_p e(-\theta)}{p^{1/2}}\right)^{-2}\left( 1 - \frac{\beta_p e(-\theta)}{p^{1/2}}\right)^{-2} \> d\theta \\ 
&= \frac{p^2}{2\pi i} \oint z^3 \frac{\left( z - \frac{p^{1/2}}{\alpha_p}\right)^{-2}\left( z - \frac{p^{1/2}}{\beta_p}\right)^{-2}}{\left( z - \frac{\alpha_p }{p^{1/2}}\right)^{2}\left( z - \frac{\beta_p }{p^{1/2}}\right)^{2}} \> dz \\
&= p^2 \left({\rm Res}_{z = \tfrac{\alpha_p}{p^{1/2}}} + {\rm Res}_{z = \tfrac{\beta_p}{p^{1/2}}}\right) z^3 \frac{\left( z - \frac{p^{1/2}}{\alpha_p}\right)^{-2}\left( z - \frac{p^{1/2}}{\beta_p}\right)^{-2}}{\left( z - \frac{\alpha_p }{p^{1/2}}\right)^{2}\left( z - \frac{\beta_p }{p^{1/2}}\right)^{2}}.
\end{align*}
Using that $\alpha_p \beta_p = 1,$ the sum of the residues is
\begin{align*}
\frac{1}{p^6}\left(1 + \frac{1}{p}\right)\left( 1 - \frac{1}{p}\right)^{-3}&\left(1 - \frac{\alpha_p^2}{p}\right)^{-3}\left( 1 - \frac{\beta_p^2}{p}\right)^{-3} \times \\
&\times\left[(1 + 2p + 2p^2 + 2p^3 + p^4) + (\alpha_p + \beta_p)^2(p - 4p^2 + p^3)\right],
\end{align*}where gives the desired form for $B_p(f, 1/2)$ upon using that $\alpha_p + \beta_p = a(p)$.
\end{proof}

Next, we record the classical multiplicative large sieve as well as its direct consequence for the eighth moment which are used in Section \ref{sec:truncation} and \ref{sec:calS}. The proofs can be found in \cite{IK}. 
\begin{lemma}\label{prop:largesieve} 
For any complex number $a_n$ with $M < n < M + N$, where $N$ is a positive integer, we have
$$ \sum_{q \leq Q} \frac{q}{\phi (q)} \sumstar_{ \chiq} \left| \sum_{M < n < M+ N} a_n \chi(n)\right|^2 \leq (Q^2 + N) \sum_{M < n < M + N} |a_n|^2.$$
\end{lemma}

\begin{lemma} \label{prop:LfunctionofLargesieve} For any $t \in \mathbb R$, we have
$$  \sum_{q \leq Q} \  \sumstar_{ \chiq} \left|L\left(\tfrac 12 + it, \chi \right)\right|^8 \ll Q^2 (t^2 + 1)(\log Q(|t| + 2))^{17},$$
where the implied constant is absolute. 

\end{lemma}

We also need orthogonality relations for characters. 
\begin{lemma} \label{lem:orthogonal} If $m, n$ are integers with $(mn, q) = 1$ then
$$ \sumstar_{\chiq} \chi(m) \cb(n) =  \sum_{\substack{q = dr \\ r | (m - n)}} \mu(d) \phi(r),$$
and
$$ \sumb_{\chiq} \chi(m) \cb(n) = \frac{1}{2} \sum_{\substack{q = dr \\ r | (m \pm n)}} \mu(d) \phi(r).$$
\end{lemma}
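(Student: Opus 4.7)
The strategy is to start from the standard orthogonality relation over all characters modulo $q$ and then pick out (i) the primitive characters by Möbius inversion and (ii) the even characters by a symmetrization.

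First, I would recall that for integers $m,n$ with $(mn,q)=1$,
$$\sum_{\chi \pmod{q}} \chi(m)\overline{\chi}(n) = \phi(q)\,\mathbbm{1}[m \equiv n \pmod{q}].$$
Every character $\chi$ modulo $q$ is induced by a unique primitive character $\chi^*$ modulo some $r \mid q$, and on integers coprime to $q$ we have $\chi(m)=\chi^*(m)$. Hence
$$\sum_{\chi \pmod{q}} \chi(m)\overline{\chi}(n) = \sum_{r \mid q}\;\sumstar_{\chi \pmod{r}} \chi(m)\overline{\chi}(n).$$
Möbius inverting this relation over the divisor lattice of $q$ yields
$$\sumstar_{\chi \pmod{q}} \chi(m)\overline{\chi}(n) = \sum_{q = dr} \mu(d)\sum_{\chi \pmod{r}} \chi(m)\overline{\chi}(n).$$
Because $(mn,q)=1$ forces $(mn,r)=1$ for every $r \mid q$, the inner sum equals $\phi(r)$ exactly when $r \mid (m-n)$ and vanishes otherwise. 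This produces the first identity
$$\sumstar_{\chi \pmod{q}} \chi(m)\overline{\chi}(n) = \sum_{\substack{q=dr \\ r \mid (m-n)}} \mu(d)\phi(r).$$

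For the second identity, I would extract the even part of each primitive character by the projector $\tfrac{1+\chi(-1)}{2}$, so that
$$\sumb_{\chi \pmod{q}} \chi(m)\overline{\chi}(n) = \tfrac{1}{2}\sumstar_{\chi \pmod{q}}\chi(m)\overline{\chi}(n) + \tfrac{1}{2}\sumstar_{\chi \pmod{q}}\chi(-m)\overline{\chi}(n).$$
The first sum on the right is already handled by the previous step. For the second, I would apply the identity just proved with $m$ replaced by $-m$ (which is still coprime to $q$), and the congruence condition $r \mid (-m - n)$ becomes $r \mid (m+n)$. Adding these two contributions and packaging the two divisibility conditions into the shorthand $r \mid (m \pm n)$ yields the claimed formula.

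The argument is entirely formal, with no real obstacle: the only thing to watch is that induced characters agree with their primitive inducers on integers coprime to $q$, which is precisely what makes the Möbius inversion clean under the hypothesis $(mn,q)=1$.
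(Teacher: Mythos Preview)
Your argument is correct and is precisely the route the paper takes: orthogonality over all characters modulo $r$ combined with M\"obius inversion over the divisors of $q$ gives the first identity, and the projector $\tfrac{1+\chi(-1)}{2}$ onto even characters then yields the second. The only difference is that you have written out the details the paper leaves implicit.
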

\begin{proof} 
The first claim follows from the orthogonality of all characters and Mobius inversion, while the second claim follows from the first by detecting even characters with $\frac{1+\chi(-1)}{2}$.
\end{proof}

\begin{lemma} \label{lem:sumcharintS} Let $y$, $t$ and $S > 0$ be real numbers and $q$ and $k$ be natural numbers with $ y^k \leq \frac{\sqrt{\phi(q)(S+1)} }{\log (q(S + 1))}$.  For any complex numbers $a(p)$, we have that
$$ \sum_{\chi \ {\rm mod} \ q} \int_{S}^{2S} \left| \sum_{ p \leq y} \frac{a(p)\chi(p)}{p^{1/2 + it}}\right|^{2k} \> dt \ll \phi(q) (S + 1) k! \left(\sum_{p \leq y} \frac{|a(p)|^2}{p}\right)^k,$$
where the implied constant is absolute.
\end{lemma}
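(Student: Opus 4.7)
The plan is to expand the $2k$-th power as a Dirichlet polynomial, apply the orthogonality relations for Dirichlet characters modulo $q$, and analyze the resulting diagonal and off-diagonal contributions.

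First I would write
\[
\bigg(\sum_{p\leq y}\frac{a(p)\chi(p)}{p^{1/2+it}}\bigg)^{\!k}=\sum_{n\leq y^k}\frac{c_k(n)\chi(n)}{n^{1/2+it}},\qquad c_k(n):=\sum_{\substack{p_1\cdots p_k=n\\ p_i\leq y}}a(p_1)\cdots a(p_k),
\]
noting that $c_k$ is supported on integers $n\leq y^k$ all of whose prime factors lie below $y$, with $c_k(n)=\binom{k}{e_1,\ldots,e_r}\prod_j a(p_j)^{e_j}$ when $n=p_1^{e_1}\cdots p_r^{e_r}$. Expanding $|\cdot|^{2k}$, interchanging orders of summation, and applying orthogonality of characters modulo $q$, the left-hand side becomes
\[
\phi(q)\sum_{\substack{n,m\leq y^k\\ n\equiv m\,(\mathrm{mod}\,q)}}\frac{c_k(n)\overline{c_k(m)}}{\sqrt{nm}}\int_S^{2S}(m/n)^{it}\,dt,
\]
the coprimality condition $(nm,q)=1$ required by orthogonality being automatic on the support of $c_k(n)c_k(m)$ since $\chi(p_i)=0$ whenever $p_i\mid q$.

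Next I would treat the diagonal $n=m$, which contributes $\phi(q)S\sum_n|c_k(n)|^2/n$. Using $\binom{k}{e_1,\ldots,e_r}\leq k!/\prod e_j!\leq k!$, hence $|c_k(n)|^2\leq k!\binom{k}{e_1,\ldots,e_r}\prod|a(p_j)|^{2e_j}$, and comparing with the multinomial expansion of $\bigl(\sum_{p\leq y}|a(p)|^2/p\bigr)^k$ yields
\[
\sum_n\frac{|c_k(n)|^2}{n}\leq k!\bigg(\sum_{p\leq y}\frac{|a(p)|^2}{p}\bigg)^k,
\]
which produces the desired main term.

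Finally, for the off-diagonal $n\neq m$ I would bound the $t$-integral by $2/|\log(m/n)|$. Since $n\equiv m\,(\mathrm{mod}\,q)$ with $n\neq m$ forces $|n-m|\geq q$, while $n,m\leq y^k$, one obtains $|\log(m/n)|\gg q/y^k$; in particular the off-diagonal is empty when $y^k<q$. Combining $|c_k(n)c_k(m)|/\sqrt{nm}\leq\tfrac12\bigl(|c_k(n)|^2/n+|c_k(m)|^2/m\bigr)$ with the fact that each residue class modulo $q$ contains at most $y^k/q+1$ integers up to $y^k$, the off-diagonal contribution is
\[
\ll\phi(q)\cdot\frac{y^k}{q}\cdot\frac{y^k}{q}\,k!\bigg(\sum_{p\leq y}\frac{|a(p)|^2}{p}\bigg)^k,
\]
and substituting the hypothesis $y^{2k}\leq\phi(q)S/(\log qS)^2$ shows this is $\ll\phi(q)S\,k!(\cdots)^k/\bigl(q(\log qS)^2\bigr)$, which is absorbed by the main term. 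The only delicate point will be the multinomial bookkeeping for the diagonal sum; once that is in place, the off-diagonal bound follows routinely from the hypothesis.
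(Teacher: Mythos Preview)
Your proof is correct and follows essentially the same approach as the paper: expand the $k$-th power as a Dirichlet polynomial, apply character orthogonality, bound the diagonal via the multinomial inequality $\binom{k}{e_1,\ldots,e_r}^2\leq k!\binom{k}{e_1,\ldots,e_r}$, and control the off-diagonal using the oscillatory integral. The only difference is cosmetic: the paper splits the off-diagonal into the cases $y^k\leq \phi(q)/\log qS$ (where the congruence $q\mid m-n$ forces $m=n$) and $y^k\leq S/\log qS$ (where it drops the congruence and bounds $\sum_{n\neq m}1/|\log(m/n)|\ll y^k\log y^k\ll S$), whereas you keep the congruence throughout and exploit $|\log(m/n)|\gg q/y^k$ together with the residue-class count; both routes lead to the same bound.
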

\begin{proof} 
Write $$\left(\sum_{p \leq y} \frac{a(p)\chi(p)}{p^{1/2 + it}}\right)^k = \sum_{n \leq y^k} \frac{a_{k, y}(n)\chi(n)}{n^{1/2 + it}}, $$
where $a_{k,y}(n) = \sum_{\substack{p_1...p_k = n \\ p_i \leq y} }a(p_1)...a(p_{k}).$
Then by orthogonality of Dirichlet characters
\begin{align*}
 &\sum_{\chi \ {\rm mod} \ q} \int_{S}^{2S}\left| \sum_{ p \leq y} \frac{a(p)\chi(p)}{p^{1/2 + it}}\right|^{2k} \> dt =  \phi(q) \sum_{\substack{m,n\leq y^k \\ (mn, q) = 1 \\ q | m - n}} \frac{a_{k,y}(m)\overline{a_{k,y}(n)}}{\sqrt{mn}} \int_S^{2S} \left( \frac{n}{m}\right)^{it}\> dt \\
&= \phi(q) S\sum_{\substack{n \leq y^k \\ (n,q) = 1}} \frac{|a_{k,y}(n)|^2}{n} + O\left(\phi(q) \sum_{\substack{m,n\leq y^k \\ m \neq n, q|m-n}} \frac{|a_{k,y}(m)\overline{a_{k,y}(n)}|}{\sqrt{mn} |\log (m/n)|} \right).
\end{align*}
The sum in the diagonal term is bounded by
\begin{align*}
\leq \sum_{\substack{n \leq y^k}} \frac{|a_{k,y}(n)|^2}{n}
&= \sum_{p_1,..., p_r \leq y} \sum_{\substack{\alpha_1, ..., \alpha_r \geq 1 \\ \sum_{\alpha_i = k}} } {k \choose \alpha_1,...,\alpha_k}^2 \frac{|a_{p_{1}}|^{2\alpha_1}...|a_{p_r}|^{2\alpha_r}}{p_1^{\alpha_1}...p_r^{\alpha_r}} \\
&\leq k! \sum_{p_1,..., p_r \leq y} \sum_{\substack{\alpha_1, ..., \alpha_r \geq 1 \\ \sum_{\alpha_i = k}} } {k \choose \alpha_1,...,\alpha_k} \frac{|a_{p_{1}}|^{2\alpha_1}...|a_{p_r}|^{2\alpha_r}}{p_1^{\alpha_1}...p_r^{\alpha_r}} \\
&= k! \left(\sum_{p \leq y} \frac{|a(p)|^2}{p}\right)^k.
\end{align*}

Now we return to the error term. Since $y^{k} \leq \frac{\sqrt{\phi(q)(S + 1)} }{\log (q(S+1))}$, either $y^k \leq \frac{\phi(q)}{\log (q(S + 1))}$ or $y^k \leq \frac{S + 1 }{\log (q(S+1))}.$

If $y^k \leq \frac{\phi(q)}{\log (q(S+1))}$, then $q|m - n$ if and only if $m =n$ so there are no additional terms. If $y^k \leq \frac{S+1 }{\log (q(S+1))},$ then the error term is bounded by
\begin{align*}
\ll \phi(q) \sum_{\substack{m\leq y^k}} \frac{|a_{k,y}(m)|^2}{m} \sum_{\substack{n \neq m \\ n \leq y^k}} \frac{1}{|\log (m/n)|}  &\ll \phi(q) y^k \log (y^k) \sum_{\substack{m\leq y^k}} \frac{|a_{k,y}(m)|^2}{m} \\
&\ll \phi(q)(S+1) k! \left(\sum_{p \leq y} \frac{|a(p)|^2}{p}\right)^k.
\end{align*}

\end{proof}

Finally, the following propositions estimate the moments of Dirichlet $L$-functions and Dirichlet twists of a $GL(2)$ automorphic $L$-function. They are crucial tools in bounding off-diagonal terms in Section \ref{sec:errorEg}.

\begin{prop} \label{thm:8momentIntandSumoverq} Assume GRH.  For $q \geq 3$, $S > 0$  and for any positive real number $k$ and any $\epsilon > 0$ we have
\begin{align*}
& \sum_{\chi \ (\rm{mod} \ q)} \int_{S \leq |s| < 2S}\left|L\left( \frac{1}{2} + s, \chi \right) \right|^{2k} ds\ll_{k, \epsilon} \phi(q) (S + 1) (\log q(S + 1))^{k^2 + \epsilon},
\end{align*}where the integration over $s$ is taken to be on a vertical line with $0\leq \tRe(s) \leq \frac{1}{\log Q}$.

\end{prop}
\begin{proof}
To prove the Theorem, it is enough to show that 
\begin{equation} \label{eqn:sumoverprimMoment}
 \sumstar_{\chi \ (\rm{mod} \ q)} \int_{S \leq |s| < 2S}\left|L\left( \frac{1}{2} + s, \chi \right) \right|^{2k} ds\ll_{k, \epsilon} \phi(q) (S+1) (\log q(S+1))^{k^2 + \epsilon}
\end{equation}
 since we can deduce the result for sum over all characters as the following.
\begin{align*}
&\sum_{\chi \ (\rm{mod} \ q)} \int_{S\leq |s| \leq 2S} \left|L\left( \frac{1}{2} + s, \chi \right) \right|^{2k} \> ds\\
&\leq \sum_{q_1 | q} \ \sumstar_{\chi_1 \ (\rm{mod} \ q_1)} \left(\int_{S\leq |s| \leq 2S} \left|L\left( \frac{1}{2} + s, \chi_1 \right) \right|^{2k}  \>ds \right)\prod_{\substack{p|q \\ p \nmid q_1}} \left(1 + \frac{1}{p^{1/2}}\right)^{2k}\\
&\ll_{k, \epsilon}  (S+1)(\log qS)^{k^2 + \epsilon} \sum_{q_1|q}\phi(q_1) \prod_{\substack{p|q \\ p \nmid q_1}} \left(1 + \frac{1}{p^{1/2}}\right)^{2k}. \\
&\ll_{k, \epsilon} q(S+1)(\log q)^{k^2 +\epsilon}  \ll_{k, \epsilon} \phi(q)(S+1)(\log q)^{k^2 + \epsilon} ,
\end{align*}
where the first inequality comes from 
\begin{align*}
\sum_{q_1|q}\phi(q_1) \prod_{\substack{p|q \\ p \nmid q_1}} \left(1 + \frac{1}{p^{1/2}}\right)^{4k} &= \prod_{p^e || q} \left[\left( 1 + \frac{1}{p^{1/2}}\right)^{4k} + \phi(p) + \phi(p^2) + ....+ \phi(p^{e})\right] 
\\ 
&= \prod_{p^e || q} \left[\left( 1 + \frac{1}{p^{1/2}}\right)^{4k} + p^e - 1\right] 
\\ 
&= \prod_{p^e || q} p^e\left( 1 + O\left( \frac{1}{p^{3/2}}\right)\right) \ll q,
\end{align*}
and the last inequality comes from $\phi(q) \gg \frac{q}{\log \log q}.$

The proof of (\ref{eqn:sumoverprimMoment}) is carried by the same arguments as the proof of Corollary A in \cite{Sound}, but here we  use the orthogonality relation in Lemma \ref{lem:orthogonal}  instead of Lemma 3 in \cite{Sound}.

\end{proof}
Similarly we have
\begin{prop} \label{thm:4twistedmomentIntandSumoverq} Assume GRH. For $q \geq 3$, $S > 0$  and for any positive real number $k$ and any $\epsilon > 0$ we have
\begin{align*}
& \sum_{\chi \ (\rm{mod} \ q)} \int_{S \leq |s| < 2S}\left|L\left( \frac{1}{2} + s, f\times \chi \right) \right|^{2k}\ll \phi(q) (S + 1) (\log q(S + 1))^{k^2 + \epsilon},
\end{align*}where the integration over $s$ is taken to be on a vertical line with $0\leq \tRe(s) \leq \frac{1}{\log Q}$.

\end{prop}

\section{Truncation} \label{sec:truncation}
From Lemma \ref{prop:fnceqnLambda}, we want to study the moment
$$  \M = \sum_{q } \Psi\left(\frac{q}{Q}\right) \sumb_{\chiq} \int_{-\infty}^{\infty} \left| \Lambda\left( \tfrac{1}{2} + iy, \chi \right)\right|^8 \> dy = 2\Delta(\Psi, Q),$$
where
$$ \Delta(\Psi, Q) = \sum_{q} \sumb_{\chiq} \Psi \left(\frac{q}{Q}\right) \Lambda_1(\chi).$$
By orthogonality as in Lemma \ref{lem:orthogonal}, we obtain that
$$\Delta(\Psi, Q) =  \h \sum_{m, n=1}^\infty \frac{\tau_4(m) \tau_4(n)}{\sqrt{mn}}\sum_{\substack{d, r\\(dr, mn) = 1 \\ r|m\pm n}} \phi(r) \mu(d) \Psi\bfrac{dr}{Q} V(m, n, dr).
$$
The first step to prove Theorem \ref{thm:eightmoment} is to slightly truncate the sums over $m$ and $n$ in $\Delta(\Psi, Q)$.  This type of procedure has appeared in the contexts of moments in other situations (see for instance, the works of Soundararajan \cite{SoundDiriMoment} and Soundararajan and Young \cite{SY}).  For a fixed $\alpha>0,$ define
 $$\dt (\Psi, Q) = \sum_{q} \sumb_{\chiq} \Psi \left(\frac{q}{Q}\right) \sum_{m, n = 1}^{\infty} \frac{\tau_4(m) \tau_4(n)}{\sqrt{mn}} \chi(m) \overline{\chi}(n) V\left(m, n ;\frac{q}{\lon}\right).$$ 
Note that we expect (and will later show) that $\Delta (\Psi, Q) \asymp Q^2(\log Q)^{16}.$ In this section, we prove that  $\dt (\Psi, Q)$ is a sufficiently close approximation to $\Delta (\Psi, Q)$.  This will allow us to apply the complementary divisor trick to $\dt$ and reduce the conductor in Section \ref{secG}.

\begin{prop} \label{prop:truncation}Preserve notation as above.  Then for any $\epsilon > 0,$
$$ {\Delta}(\Psi, Q) - \dt\left(\Psi, Q \right) \ll Q^2 (\log Q)^{15+\epsilon},$$
where the implied constant depends on $\epsilon$ and $\alpha.$
\end{prop}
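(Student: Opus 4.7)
The plan is to write the difference $\Delta(\Psi,Q)-\dt(\Psi,Q)$ as a contour integral carrying the factor $1-(\log Q)^{-4\alpha s}$, which vanishes at $s=0$, and then to invoke the eighth shifted-moment bound of Proposition \ref{thm:shiftedmoment} (with $k=2$) on the inner character sum. Inserting the Mellin representation (\ref{eqnW}) of $W$ into (\ref{eqnV}) gives, for any $c>0$,
\[
V(m,n;q) - V\!\left(m,n;\tfrac{q}{\lon}\right) \;=\; \int_{-\infty}^{\infty}\!\Bigl(\tfrac{n}{m}\Bigr)^{\!it}\,\frac{1}{2\pi i}\!\int_{(c)}\! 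G(\tfrac12+s,t)\!\left(\tfrac{mn\pi^{4}}{q^{4}}\right)^{\!-s}\!\bigl(1-(\log Q)^{-4\alpha s}\bigr)\frac{ds}{s}\,dt.
\]
The crucial feature is that $(1-(\log Q)^{-4\alpha s})/s$ is entire in $s$ and satisfies $\bigl|1-(\log Q)^{-4\alpha s}\bigr|/|s|\ll\log\log Q$ in a neighbourhood of $s=0$. Substituting into $\Delta-\dt$ and taking $c>1/2$, the Dirichlet series $\sum_{m}\tau_4(m)\chi(m)m^{-1/2-s-it}$ converges absolutely, so all sums may be interchanged and the double sum over $m,n$ identifies as $L^{4}(\tfrac12+s+it,\chi)L^{4}(\tfrac12+s-it,\overline{\chi})$.

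I then shift the $s$-contour from $\tRe s=c$ to $\tRe s=\delta\assign 1/\log Q$. No pole is crossed: for primitive $\chiq$ the $L$-functions are entire, and the only potential pole at $s=0$ is cancelled by the truncation factor. On the new contour, $|(q/\pi)^{4s}|\ll 1$ uniformly for $q\asymp Q$. Taking absolute values in the character sum and using $|L(\tfrac12+s-it,\overline{\chi})|=|L(\tfrac12+\overline{s}+it,\chi)|$, Proposition \ref{thm:shiftedmoment} applied with $k=2$, $z_1=s+it$, $z_2=\overline{s}+it$ (so that $z_1+\overline{z_2}=2s$) yields
\[
\sumb_{\chiq}\bigl|L(\tfrac12+s+it,\chi)\bigr|^{4}\bigl|L(\tfrac12+s-it,\overline\chi)\bigr|^{4} \;\ll\; \phib(\log q)^{8+\epsilon}\min\!\bigl\{(\log q)^{8},\,|s|^{-8}\bigr\}.
\]
Summing trivially over $q\asymp Q$ produces $\sum_q\Psi(q/Q)\phib(\log q)^{8+\epsilon}\ll Q^{2}(\log Q)^{8+\epsilon}$, so only the $(s,t)$-integral remains.

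For the residual integral I will use that Stirling applied to $G(\tfrac12+s,t)=\Gamma^{4}(\tfrac14+\tfrac{s+it}{2})\Gamma^{4}(\tfrac14+\tfrac{s-it}{2})$ gives exponential decay $e^{-2\pi\max(|\tau|,|t|)}$ on $s=\delta+i\tau$; hence $\int_{-\infty}^{\infty}|G(\tfrac12+s,t)|\,dt\ll e^{-2\pi|\tau|}\log^{2}(2+|\tau|)$. Split the remaining $\tau$-integral at $|\tau|\asymp 1/\log Q$. In the central window the factor $(1-(\log Q)^{-4\alpha s})/|s|$ is $O(\log\log Q)$ and the min-factor is $(\log Q)^{8}$, producing a combined contribution of size $\asymp(1/\log Q)\cdot\log\log Q\cdot(\log Q)^{8}=(\log Q)^{7+\epsilon}$; in the intermediate range $|s|^{-1}$ and $|s|^{-8}$ govern the two factors and give contributions of the same order, while $|\tau|\gg 1$ is negligible by the exponential decay. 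Putting everything together produces $\Delta-\dt\ll Q^{2}(\log Q)^{15+\epsilon}$. The main obstacle is this final bookkeeping near $s=0$: it is precisely the cancellation of the $1/s$ singularity by the zero of $1-(\log Q)^{-4\alpha s}$ that restricts the effective $\tau$-window to width $\asymp 1/\log Q$, and this restriction supplies the single factor of $\log Q$ that one must save over the naive eighth-moment bound $Q^{2}(\log Q)^{16}$.
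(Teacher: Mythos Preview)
Your approach is essentially the paper's: express $\Delta-\dt$ as the $(s,t)$-integral carrying the factor $\bigl(1-(\log Q)^{-4\alpha s}\bigr)/s$, shift $s$ close to the imaginary axis, bound this factor by $O(\log\log Q)$, apply Proposition~\ref{thm:shiftedmoment} with $k=2$, and integrate. The paper shifts all the way to $\tRe s=0$ and changes variables to $(u_1,u_2)=(t+\tIm s,\,t-\tIm s)$, but your choice $\tRe s=1/\log Q$ with the original variables is equivalent, since your $z_1+\overline{z_2}=2s$ plays exactly the role of their $i(u_1-u_2)$.

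One slip to fix. In the ``intermediate range'' $|\tau|>1/\log Q$ you bound the truncation factor by $|s|^{-1}$ and claim the contribution matches the central window; but $\int_{1/\log Q}^{1}\tau^{-9}\,d\tau\asymp(\log Q)^{8}$, which is one logarithm too many. The remedy is already in your hands: the estimate $\bigl|1-(\log Q)^{-4\alpha s}\bigr|/|s|\ll\log\log Q$ that you invoke in the central window in fact holds \emph{uniformly} on the whole line $\tRe s=\delta$ (for $|s|\le 1/\log\log Q$ by Taylor expansion, and for $|s|>1/\log\log Q$ because $2/|s|<2\log\log Q$). Use it throughout; then the $\tau$-integrand is $\ll\log\log Q\cdot\min\{(\log Q)^{8},\tau^{-8}\}$, whose integral is $\ll(\log Q)^{7}\log\log Q$, and the final bound $Q^2(\log Q)^{15+\epsilon}$ follows. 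Two cosmetic points: take $\delta=1/\log(2Q)$ so that the hypothesis $\tRe z_i\le 1/\log q$ of Proposition~\ref{thm:shiftedmoment} is satisfied for every $q$ in the support of $\Psi$, and note (as the paper does) that the range $|z_i|>q$ is dispatched by Lindel\"of together with the exponential decay of $G$.
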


The key ingredient for proving the proposition is the large sieve (Lemma \ref{prop:largesieve}).

\begin{proof} First, we introduce a smooth partition of unity.  Let $\sumd$ denote a dyadic sum, and let $F_M$ be a smooth function supported in $[M/2,3M]$ satisfying $F^{(j)}_{M}(x)\ll_j \frac{1}{M^j}$ for all $j\geq 0$, and such that 
$ 1 = \sumd_M F_M(x).$
Moreover the sum over $M$ is such that $\sumd_{M\leq X} 1 \ll \log X$. 

By the change of variables from $s$ to $\frac{v + z}{2}$ and $t$ to $\frac{v-z}{2i}$, and applying the smooth partition of unity, we have that $ {\Delta}(\Psi, Q) - \dt\left(\Psi, Q \right)$ is 
\begin{align} \label{eqn:boundtrunc}
&\frac{2\pi}{(2\pi i)^2} \sumd_M \sumd_N \int_{(1)} \int_{(1)} \Gamma^4\left( \frac{1}{4} + \frac{v}{2} \right) \Gamma^4 \left(\frac{1}{4} + \frac{z}{2}\right)\left[ \left( \frac{Q}{\pi}\right)^{2v + 2z} - \left( \frac{Q}{\pi\lon}\right)^{2v + 2z}\right] \cdot \nonumber \\
&\hskip .5in \cdot  \sum_{q} \Psi\left(\frac{q}{Q}\right) \left( \frac{q}{Q}\right)^{2v + 2z}\sumb_{\chi \ {\rm mod}\ q}\sum_{m}^{\infty} \frac{\tau_4(m)\chi(m)}{m^{1/2 + v}} F_M(m) \sum_{n}^{\infty}\frac{\tau_4(n)\overline{\chi(n)}}{n^{1/2 + z}} F_N(n)\> \frac{dv \ dz}{(v+z)}.   
\end{align}

To evaluate the integrals above, we shift contour integrals according to the size of $M$ and $N$. After shifting the integrals, we bound the sum over $q$ and $\chi$ by using Cauchy-Schwarz inequality and then applying the large sieve.  In particular, we have
\begin{align*}
& \sum_{q} \Psi\left(\frac{q}{Q}\right) \sumb_{\chi \ {\rm mod}\ q}\sum_{m}^{\infty} \frac{\tau_4(m)\chi(m)}{m^{1/2 + v}} F_M(m) \sum_{n}^{\infty}\frac{\tau_4(n)\overline{\chi(n)}}{n^{1/2 + z}} F_N(n) \\
&\ll \left(\sum_{q} \Psi\left(\frac{q}{Q}\right) \sumb_{\chi \ {\rm mod}\ q}\left| \sum_{m}^{\infty} \frac{\tau_4(m)\chi(m)}{m^{1/2 + v}} F_M(m)\right|^{2} \right)^{\frac 12} \left(\sum_{q} \Psi\left(\frac{q}{Q}\right) \sumb_{\chi \ {\rm mod}\ q}\left| \sum_{n}^{\infty}\frac{\tau_4(n)\overline{\chi(n)}}{n^{1/2 + z}} F_N(n) \right|^2 \right)^{\frac 12} \\
&\ll (Q^2 + M)^{\frac 12} (Q^2 + N)^{\frac 12}\left| \sum_{M/2 \leq n\leq 3M}\frac{\tau_4^2(m)}{m^{1 + 2 \R (v)}}  \right|^{\frac 12}\left| \sum_{N/2 \leq n\leq 3N}\frac{\tau_4^2(n)}{n^{1 + 2 \R (z)}}  \right|^{\frac 12} \\
&\ll (Q^2 + M)^{\frac 12} (Q^2 + N)^{\frac 12} \frac{1}{M^{\R v}N^{\R z}}(\log M)^{\frac{15}{2}} (\log N)^{\frac {15}{2}}.
\end{align*}
Now, we consider the following cases.
\subsection*{Case 1: $M > Q^2$} (By symmetry, the case $N > Q^2$ is the same.) We do not shift the integral over $v$. The contour integration over $z$ is shifted to the line $\R(z) = \ell$, where $\ell = -1/4$ if $N \leq Q^2$, and $\ell = 1$ otherwise. For fixed $M$ and $N$, the integral appearing in \eqref{eqn:boundtrunc} is bounded by 
\begin{align*}
&\ll \int_{(1)} \int_{(\ell)}  \left| \Gamma^{4} \left( \frac{1}{4} + \frac{v}{2}\right)\Gamma^{4} \left( \frac{1}{4} + \frac{z}{2}\right)\right| \frac{Q^{2 + 2\ell} }{MN^\ell} (Q^2 + M)^{\frac 12}(Q^2 + N)^{\frac 12} (\log M)^{\frac{15}{2}}(\log N)^{\frac {15}{2}} \> d|v| \> d|z| \\
&\ll \frac{Q^{2 + 2\ell} }{MN^\ell}  M^{\frac 12}(Q^2 + N)^{\frac 12} (\log M)^{\frac{15}{2}}(\log N)^{\frac {15}{2}}.
\end{align*}
Hence the contribution to (\ref{eqn:boundtrunc}) is 
\begin{align*}
&\ll \sumd_{M > Q^2} \sumd_{N > Q^2} \frac{Q^4}{MN} M^{\frac 12}N^{\frac 12} (\log M)^{\frac{15}{2}}(\log N)^{\frac{15}{2}} + \sumd_{M > Q^2} \sumd_{N \leq Q^2} \frac{Q^{\frac 32}N^{\frac 14}}{M} M^{\frac 12}Q (\log M)^{\frac{15}{2}}(\log N)^{\frac{15}{2}}  \\
&\ll Q^2(\log Q)^{15}.
\end{align*}

\subsection*{Case 2: $\frac{Q^2}{\lotwo} < M \leq Q^2$ and $N \leq Q^2$} (By symmetry, this is the same case as when $\frac{Q^2}{\lotwo} < N \leq Q^2$ and $M\leq Q^2$). We shift the contour integral over $v$ to $\R(v) = 0.$ Moreover we shift the contour in $z$ to $\R(z) = \ell$ where $\ell = 0$ if $\frac{Q^2}{\lotwo} < N \leq Q^2$ and $\ell = -1/4$ otherwise. We remark that 
$$ \frac{1}{v+ z}\left[\left( \frac{Q^2}{\pi^2}\right)^{v+ z} - \left(\frac{Q^2}{\pi^2\lotwo }\right)^{v + z}\right] $$
is entire in $v$ and $z$. For fixed $M$ and $N$, the integral appearing in \eqref{eqn:boundtrunc} is bounded by 
$
Q^2(\log Q)^{15 + \epsilon}\frac{Q^{2\ell}}{(\log Q)^{2\alpha \ell}N^\ell}.
$ Hence the contribution to (\ref{eqn:boundtrunc}) from this case is 
\begin{align*}
&\ll \sumd_{\frac{Q^2}{\lotwo} < M \leq Q^2} \ \sumd_{\frac{Q^2}{\lotwo } < N \leq Q^2} Q^2 (\log Q)^{15 + \epsilon}  + \sumd_{\frac{Q^2}{\lotwo} < M \leq Q^2} \ \sumd_{N \leq \frac{Q^2}{\lotwo}} Q^{\frac 32} (\log Q)^{15 + \epsilon + \frac{\alpha}{2}} N^{\frac 14} \\
&\ll Q^2 (\log Q)^{15 + \epsilon}.
\end{align*}

\subsection*{Case 3: $M \leq \frac{Q^2}{\lotwo}$ and $N \leq \frac{Q^2}{\lotwo}$} For this case we shift integrals in both $v$ and $z$ to the line with real part $-1/4$. Since the integrand is holomorphic, we encounter no poles. The contribution to (\ref{eqn:boundtrunc}) from this case is 
\begin{align*}
&\ll  Q^2 (\log Q)^{15} \sumd_{ M \leq \frac{Q^2}{\lotwo}} \ \sumd_{N \leq \frac{Q^2}{\lotwo}} \frac{M^{\frac 14}N^{\frac 14} (\log Q)^{\alpha}}{Q} \ll Q^2 (\log Q)^{15}. 
\end{align*}
Case 1 - Case 3 gives that Equation \ref{eqn:boundtrunc} is bounded by $Q^2(\log Q)^{15 + \epsilon}.$
\end{proof}

\section{Splitting off the diagonal terms}
From the previous section, it is sufficient to consider
$$\dt\left(\Psi, Q\right) = \h \sum_{m, n=1}^\infty \frac{\tau_4(m) \tau_4(n)}{\sqrt{mn}}\sum_{\substack{d, r\\(dr, mn) = 1 \\ r|m\pm n}} \phi(r) \mu(d) \Psi\bfrac{dr}{Q} V\left(m, n, \frac{dr}{\lon}\right).$$
Let $D = (\log Q)^\delta$ for $\delta>0$ a parameter to be determined (eventually we pick $\delta = 130$) and split 
\begin{equation}\label{eqn:split}
\dt\left(\Psi, Q\right) = \D  +\Sm + \G
\end{equation}
where the diagonal term $\D$ consists of the terms with $m=n$, the term $\Sm$ consists of the remaining terms with $d>D$ and $\G$ consists of the rest of the terms with $d \leq D$. 

The diagonal terms $\D$ can be computed as in the following Proposition
\begin{prop}\label{sec:diagonal}
Let $\widetilde{\Psi}$ be the Mellin transform of $\Psi,$ which is defined by \begin{equation} \label{eqn:MellinPsi}
 \widetilde{\Psi} (s) = \int_{0}^\infty \Psi(u) u^s \frac{du}{u}.
\end{equation}
Then 
\begin{align} \label{eqn:diag1}
\D &= 2^{16}\sum_{q} \phib \Psi\bfrac{q}{Q} \frac{(\log q )^{16}}{16!} \frac{A(1/2)}{B_q(1/2)} \int_{-\infty}^{\infty} G(1/2, t)dt + O(Q^2(\log Q)^{15+ \epsilon}).
\end{align}
Moreover we can write the sum of the main term in (\ref{eqn:diag1}) as
\begin{align} \label{eqn:eulerfordiagonal}
 2^{16}Q^2 \frac{(\log Q )^{16}}{16!} \widetilde{\Psi}(2) \frac{A(1/2)}{2} \prod_p\left( 1 - \frac{1}{p}\right)\left(1 + \frac{1}{B_p(1/2)} \left( \frac{1}{p} - \frac{1}{p^2} - \frac{1}{p^3}\right)\right) \int_{-\infty}^{\infty} G(1/2, t)dt.
\end{align}

\end{prop}

\begin{proof}

We have that
\begin{align*}
 \D &= \h \sum_{n=1}^\infty \frac{\tau_4^2(n) }{n}\sum_{\substack{d, r\\(dr, n) = 1}} \phi(r) \mu(d) \Psi\bfrac{dr}{Q} V\left(n, n, \frac{dr}{\lon }\right)\\
&= \sum_{q} \phib \Psi\bfrac{q}{Q} \frac{1}{2\pi i} \int_{-\infty}^\infty \int_{(1)}  \left(\sum_{\substack{n=1 \\ (n, q) = 1}}^\infty \frac{\tau_4^2(n)}{n^{1+2s}} \right) G(1/2+s, t) \bfrac{q}{2\pi \lon }^{4s}\frac{ds}{s} dt.
\end{align*}
From Lemma \ref{lem:eulerprod}, 
$$ \sum_{n=1}^\infty \frac{\tau_4^2(n)}{n^{1+2s}} = \zeta^{16}(1 + 2s) \frac{A(s + 1/2) }{\mathcal B_q(s + 1/2)},$$
where $A$ is analytic when $\tRe s > -1/4.$

Thus the integrand has a pole of order $17$ at $s=0$ in the region $\tRe s > -1/4 + \epsilon$, and shifting $s$ to $\tRe (s) = -1/4 + \epsilon$ gives a residue of 
$$ \frac{4^{16}}{2^{16} 16!} \frac{A(1/2)}{B_q(1/2)} G(1/2, t) (\log q)^{16} + O((\log q)^{15+ \epsilon}),
$$
and we obtain (\ref{eqn:diag1}). 

To obtain (\ref{eqn:eulerfordiagonal}), we use the fact that $\phib = \frac{1}{2} \phi^*(q) + O(1),$ and the function $\phi^*$ is multiplicative with $\phi^*(p) = p-2$ and $\phi^*(p) = p^{k-2}(p-1)^2$ for $k \geq 2.$

\end{proof}
\section{The off-diagonal term: the sum $\Sm$} \label{sec:calS}
Recall that
$$\Sm = \h\sum_{\substack{m, n=1\\m\neq n}}^\infty \frac{\tau_4(m) \tau_4(n)}{\sqrt{mn}}\sum_{\substack{d, r\\(dr, mn) = 1 \\ r|m\pm n\\d>D}} \phi(r) \mu(d) \Psi\bfrac{dr}{Q} V\left(m, n, \frac{dr}{\lon}\right).
$$

We express the condition $r| m \pm n$ using the even characters $\chir.$ Specifically,
$$ \Sm = \sum_{\substack{d, r\\d>D}} \mu(d) \Psi\bfrac{dr}{Q} \sum_{\substack{\chir \\ \chi(-1) = 1}}  \sum_{\substack{m, n=1 \\ m \neq n \\(d, mn) = 1 }}^\infty \frac{\chi(m)\overline{\chi}(n)\tau_4(m) \tau_4(n)}{\sqrt{mn}}\V\left(m, n, \frac{dr}{\lon}\right).$$

The contribution to the principal character $\chi = \chi_0$ gives a main term, and the non-principal characters contribute to an acceptable error term.

\begin{prop} \label{prop:sumSm} There exists an absolute constant $A$ such that
$$ \Sm = \mathcal {MS}(\Psi, Q) + O\left(\frac{Q^2 (\log Q)^{16 + A + \epsilon}}{D^{1 - \epsilon}}\right),$$
where 
\begin{equation} \label{eqn:mainMS}
 \mathcal {MS}(\Psi, Q) = - \sum_{\substack{m, n = 1  \\ m \neq n}} \frac{\tau_4 (m) \tau_4 (n)}{\sqrt{mn}} \sum_{(q, mn) = 1} \Psi \left( \frac{q}{Q}\right)\left( \sum_{\substack{dr = q \\ d \leq D}} \mu(d) \right)  V\left(m, n ; \frac{q}{\lon} \right).
\end{equation}

\end{prop}
Note that taking $D > (\log Q)^{A+1}$ gives us an acceptable error term.  

\begin{proof}
The principal character gives 
$$ \sum_q \left( \sum_{\substack{dr = q \\d>D}} \mu(d)\right) \Psi\bfrac{q}{Q}  \sum_{\substack{m, n=1 \\ m \neq n \\(q, mn) = 1 }}^\infty \frac{\tau_4(m) \tau_4(n)}{\sqrt{mn}}\V\left(m, n, \frac{q}{\lon}\right).$$
Since $\sum_{dr = q} \mu(d) = 0$ for $q > 1$, the above equals to $\mathcal {MS}$ stated in the proposition. 

Now we consider the contribution to the non-principal characters. We first reintroduce the terms $m=n$.  This gives an acceptable error since
\begin{align*}
&\sum_{n=1}^\infty \frac{\tau_4^2(n)}{n}\sum_{\substack{d, r\\(dr, n) = 1 \\d>D}} \phi(r) \mu(d) \Psi\bfrac{dr}{Q} \V\left(n, n, \frac{dr}{\lon}\right)\\
&= \sum_{\substack{d, r\\d>D}} \phi(r) \mu(d) \Psi\bfrac{dr}{Q}  \frac{1}{2\pi i} \int_{(1)} \int_{-\infty}^\infty \sum_{\substack{n=1\\(dr, n) = 1} }^\infty \frac{ \tau_4^2(n)}{n^{1+2s}} G(1/2+s, t) \bfrac{dr}{2\pi \lon }^{4s}\frac{ds}{s} dt. 
\end{align*}
From the calculation of the diagonal term, the double integral is $\ll (\log dr)^{16}.$ Therefore the above is

\begin{align*}
&\ll (\log Q)^{16} \sum_{\substack{d, r\\d>D}} \phi(r) \Psi\bfrac{dr}{Q} \ll  (\log Q)^{16} \sum_{d>D} \sum_{r<2Q/d} r \ll  \frac{Q^2(\log Q)^{16}}{D}.
\end{align*}

Now we bound the resultant sum
\begin{align*}
\Smo &= \sum_{\substack{d, r\\d>D}} \mu(d) \Psi\bfrac{dr}{Q} \sum_{\substack{\chir \\ \chi(-1) = 1 \\ \chi \neq \chi_0}}  \sum_{\substack{m, n=1\\(d, mn) = 1 }}^\infty \frac{\chi(m)\overline{\chi}(n)\tau_4(m) \tau_4(n)}{\sqrt{mn}}\V\left(m, n, \frac{dr}{\lon}\right)\\
&= \sum_{\substack{d, r\\d>D}} \mu(d) \Psi\bfrac{dr}{Q} \sum_{\substack{\chir \\ \chi(-1) = 1 \\ \chi \neq \chi_0}}  \frac{1}{2\pi i} \int_{(1)} \int_{-\infty}^\infty \sum_{\substack{m, n=1\\(d, mn) = 1}}^\infty \frac{\chi(m)\overline{\chi}(n)\tau_4(m) \tau_4(n)}{m^{1/2+s + it}n^{1/2+s - it}}  \cdot \\
& \hskip 3in  \cdot G(1/2+s, t)  \bfrac{dr}{2\pi(\log Q)^{\alpha} }^{4s}\frac{ds}{s} dt.
\end{align*}
We have
$$\sum_{\substack{m, n=1\\(d, mn)=1}}^\infty \frac{\chi(m)\overline{\chi}(n)\tau_4(m) \tau_4(n) }{m^{1/2+s + it}n^{1/2 + s - it}}=  \frac{L^4(1/2+s + it,  \chi)L^4(1/2+s - it,  \overline{\chi})}{L_d^4(1/2+s + it,  \chi)L_d^4(1/2+s -it, \cb)},
$$
where $L_d(s, \psi) = \prod_{p|d} \left(1 - \frac{\psi(p)}{p^{s}} \right)^{-1} \ll d^{\epsilon}$. Since $\chi$ is non-principal, we can shift the line of integration to $\tRe (s) = \frac{1}{\log Q}$ without passing any poles. We write $s = \frac{1}{\log Q} + iv,$ and let $t_1 = v + t$ and $t_2 = v - t$. Also, we can write $ \sum_{\chir} = \sum_{\ell | r} \sumstar_{\chi' \  \textrm{mod} \ \ell  }$, and 
\begin{align*}
 L\left(\tfrac{1}{2}+ \tfrac{1}{\log Q} + it,  \chi\right) &= L\left(\tfrac{1}{2}+ \tfrac{1}{\log Q} + it,  \chi'\right) \prod_{\substack{p | r \\ p \nmid \ell}} \left(1 - \frac{\chi'(p)}{p^{\frac{1}{2} + \frac{1}{\log Q} + it}}\right) \\
&\ll \tau\left( \frac{r}{\ell}\right)\left| L\left(\tfrac{1}{2}+ \tfrac{1}{\log Q} + it,  \chi'\right)\right|
\end{align*}
Then, since $ G\left(1/2 + s, t\right) \ll \exp(-|t_1| - |t_2|)$, we apply Cauchy-Schwarz to obtain
\begin{align*}
\Smo &\ll \log Q \sum_{d > D} d^{\varepsilon} \int_{-\infty}^{\infty} \int_{-\infty}^{\infty} \exp(-|t_1| - |t_2|) \sum_{ \ell \leq \frac{2Q}{d}} \sum_{\substack{r \\\ell | r}} \tau^{8}\left( \frac{r}{\ell}\right)\times \\
& \times \sumstar_{\substack{\chi' \textrm{mod} \ \ell \\ \chi(-1) = 1}} \left\{ \left| L\left(\tfrac{1}{2}+ \tfrac{1}{\log Q} + it_1,  \chi'\right)\right|^8 + \left|L\left(\tfrac{1}{2}+ \tfrac{1}{\log Q} + it_2,  \overline{\chi'}\right)\right|^8 \right\} \> dt_1 \> dt_2.
\end{align*}
We have $\sum_{\substack{r \\\ell | r}} \tau^{8}\left( \frac{r}{\ell}\right) \ll \sum_{h \leq \frac{2Q}{d\ell}} \tau^8(h) \ll \frac{2Q}{d\ell} (\log Q)^A$ for an absolute constant $A$, where $\tau$ is the divisor function. Using this and  Lemma \ref{prop:LfunctionofLargesieve}, we get
\begin{align*}
\Smo &\ll Q(\log Q)^{A}\sum_{d > D} \frac{1}{d^{1 - \varepsilon}} \int_{-\infty}^{\infty} \int_{-\infty}^{\infty} \exp(-|t_1| - |t_2|) \sumd_{L \leq \frac{2Q}{d}} \frac{1}{L}\sum_{\ell \sim L } \\
&\times \sumstar_{\substack{\chi' \textrm{mod} \ \ell \\ \chi(-1) = 1}} \left\{\left| L\left(\tfrac{1}{2}+ \tfrac{1}{\log Q} + it_1,  \chi'\right)\right|^8 + \left|L\left(\tfrac{1}{2}+ \tfrac{1}{\log Q} + it_2,  \overline{\chi'}\right)\right|^8 \right\} \> dt_1 \> dt_2\\
&\ll Q(\log Q)^{A}\sum_{d > D} \frac{1}{d^{1 - \varepsilon}} \sumd_{L \leq \frac{2Q}{d}} L \ll \frac{Q^2 (\log Q)^A}{D^{1 - \epsilon}}, 
\end{align*}
where $A$ is an absolute constant. 

\end{proof}
\section{Treatment of $\G$}\label{secG}
\subsection{The complementary divisor}
Recall that
$$\G = \h \sum_{\substack{m,n=1\\m\neq n}}^{\infty} \frac{\tau_4(m)\tau_4(n)}{\sqrt{mn}} \sum_{\substack{d, r\\(dr, mn)=1\\r|m\pm n\\d\leq D}} \mu(d) \phi(r) \Psi\bfrac{dr}{Q} \V\left(m, n, \frac{dr}{\lon}\right).
$$
We write $g=(m, n)$ and $m=gM$, $n=gN$, so that $(M, N) = 1$, and write $\phi(r) = \sum_{al=r}\mu(a)l.$  The latter identity extracts the arithmetic information from $\phi(r)$ so that we may eventually sum smoothly over the essential parts of the modulus.  Then the sum over $d$ and $r$ is
$$\sum_{\substack{d, a, l\\(d, mn)=1\\(al, g)=1\\al|M\pm N\\d\leq D}} \mu(d) \mu(a) l \Psi\bfrac{dal}{Q} \V\left(gM, gN, \frac{dal}{\lon}\right).
$$

In the above, we have used that $(r, mn) = 1$ if and only if $ (r, g) = 1$ since $r|m \pm n$.  We let $|M\pm N| = alh$.  We want to replace the condition modulo $r=al$ with a condition modulo $h$, which will be small when $r$ is large.  Thus we replace $l$ with $\frac{|M \pm N|}{ah}$.  To do so, we express the condition $(l, g) = 1$ by $\sum_{b|(l, g)} \mu(b)$.  Writing $l = bk$, the sum becomes
$$\sum_{\substack{d\leq D\\(d, gMN) = 1}} \mu(d) \sum_{(a, g) = 1} \mu(a) \sum_{b|g}\mu(b) \sum_{\substack{k\geq 1\\|M\pm N| = abkh}}bk \Psi\bfrac{dabk}{Q} \V\left(gM, gN, \frac{dabk}{\lon }\right).
$$
We substitute $k=\frac{|M \pm N|}{abh}$ to get 
\begin{align} \label{eqn:stepbeforeW}
Q \sum_{d\leq D}  \sum_{(a, g)=1} \sum_{b|g} \sum_{\substack{h > 0 \\ M \equiv \mp N \ ({\rm mod} \ abh )}} & \frac{\mu(d) \mu(a) \mu(b) }{ad} \\
& \times \left(\frac{d|M\pm N|}{Qh} \right)\Psi \bfrac{d|M \pm N|}{Qh} \V\left(gM, gN; \frac{d|M \pm N|}{h Q \lon }\right). \nonumber
\end{align}
For non-negative real numbers $u, x, y$ and for each choice of sign, we define
$$\W^\pm(x, y; u) = u|x \pm y|\Psi(u|x\pm y|) \V(x, y; u|x\pm y|).
$$
Since
$$\V(m, n; \mu) =  \int_{-\infty}^\infty \bfrac{n}{m}^{it} W\left( mn\bfrac{\pi}{\mu}^4, t\right) dt,
$$
we have
\begin{equation} \label{eqn:Vc}
\V(cm, cn; \sqrt c \mu) 
= \int_{-\infty}^\infty \bfrac{n}{m}^{it} W\left( mn\bfrac{\pi}{\mu}^4, t\right) dt  = \V (m, n; \mu).
\end{equation}
Thus (\ref{eqn:stepbeforeW}) becomes
\begin{align*}
& Q \sum_{d\leq D}  \sum_{(a, g)=1} \sum_{b|g} \sum_{\substack{h > 0 \\ M \equiv \mp N \ ({\rm mod} \ abh )}}  \frac{\mu(d) \mu(a) \mu(b) }{ad} \W^\pm \left(\frac{gM \lotwo}{Q^2}, \frac{gN \lotwo}{Q^2} ; \frac{Qd}{gh \lotwo}\right). 
\end{align*}
Note that $(MN, abh) = 1.$  We express the condition $M \equiv \mp N (\ {\rm mod} \ abh)$ using characters $\chi$ (mod $abh$).  We then separate the principal character contribution, which is the main term, and the non-principal characters which contribute to an acceptable error term. Specifically,
$$ \G = \mathcal{MG}(\Psi, Q) + \mathcal{EG}(\Psi, Q),$$
where 
\begin{align} \label{eqn:maintermG}
\mathcal{MG}(\Psi, Q) = \frac{Q}{2} \sum_{\substack{m, n = 1\\ m \neq n}}^{\infty} & \frac{\tau_4(m) \tau_4(n)}{\sqrt{mn}}\sum_{\substack{d \leq D \\ (d, gMN) = 1}}  \sum_{(a, g)=1} \sum_{b|g} \sum_{\substack{h > 0 \\ (abh, MN) = 1}}  \frac{\mu(d) \mu(a) \mu(b) }{ad \phi(abh)}\\
&\times  \W^\pm \left(\frac{gM \lotwo}{Q^2}, \frac{gN \lotwo}{Q^2} ; \frac{Qd}{gh \lotwo}\right), \nonumber
\end{align}
and
\begin{align} \label{eqn:errorG}
\mathcal{EG}(\Psi, Q) =  \frac{Q}{2} \sum_{\substack{m, n = 1\\ m \neq n}}^{\infty} & \frac{\tau_4(m) \tau_4(n)}{\sqrt{mn}}\sum_{\substack{d \leq D \\ (d, gMN) = 1}}  \sum_{(a, g)=1} \sum_{b|g} \sum_{\substack{h > 0 \\ (abh, MN) = 1}}  \frac{\mu(d) \mu(a) \mu(b) }{ad \phi(abh)} \\
&\times \sum_{\substack{\chi \ ({\rm mod} \ abh) \\ \chi \neq \chi_0}} \chi(M) \cb(\mp N) \W^\pm \left(\frac{gM \lotwo}{Q^2}, \frac{gN \lotwo}{Q^2} ; \frac{Qd}{gh \lotwo}\right). \nonumber
\end{align}

\subsection{Mellin transforms of $\W^\pm$} \label{sec:Mellin} A simple heuristic estimate of $\mathcal{MG}$ tells us that it is, up to a factor of a power of $\log Q$, of size $Q^2$ (for this estimate, note that $h$ is the essential part of the modulus $abh$).  Since we expect cancellation in the sum when $\chi$ is non-principal, we expect $\mathcal{EG}$ to be small.

To evaluate $\Mg$ and $\Eg$ more precisely, we write $\W^\pm$ in terms of its Mellin transforms. There are three different types that we shall consider. They come from taking Mellin transforms in the variable $u$ for when we need to sum over the modulus $h$, the variables $x , y$ for when we need to sum over $M$ and $N$, and all three variables for when we need to sum over $M$, $N$, and $h$.  (In the description above, we have neglected to mention the conceptually less important sums over $d$, $a$, $b$ and $g$.)  

We collect the properties of the various Mellin transforms in the following three lemmas. The proofs of the lemmas are the same as  the ones in Section 7 of \cite{CIS}, but using the bounds of Lemma \ref{lem:weightWV} instead. 
\begin{lemma} \label{lem:MellinU}
Given positive real numbers $x$ and $y$, define
$$ \Wt^\pm_1(x, y; z) = \int_0^\infty \W^\pm (x, y ; u) u^{z} \frac{du}{u}.$$
Then the functions $\Wt^\pm_1(x, y; z)$
are analytic for all $z \in \mathbb C$. We have the Mellin inversion formula
\begin{equation} \label{eqn:mellinU}
 \W^\pm(x, y; u) = \frac{1}{2\pi i} \int_{(c)} \Wt^\pm_1(x, y; z) u^{-z} \> dz,
\end{equation}
where the integral is taken over the line $\tRe(z) = c$ for any real number $c$. The Mellin transforms $\Wt^\pm_1(x, y; z)$ satisfy for any non-negative integer $\nu$
$$ |\Wt^\pm_1(x, y;z)| \ll_\nu |x \pm y|^{-\tRe z} \prod_{j = 1}^\nu |z + j|^{-1} \exp \left(-c_1 \max (x, y)^{1/4}\right)$$
for some absolute constant $c_1.$
\end{lemma}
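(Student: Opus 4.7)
The plan is to reduce everything to a one‑variable Mellin transform of a test function with compact support in $v$, so that the claims become standard manipulations. Starting from
\[
\W^\pm(x, y; u) = u|x \pm y|\,\Psi(u|x\pm y|)\,\V(x, y; u|x\pm y|),
\]
I would make the change of variables $v = u|x\pm y|$ in the definition of $\Wt_1^\pm$. Using the identity (\ref{eqn:Vc}) implicitly via the scaling of $\V$ in its third argument, and noting that $du/u = dv/v$, this gives
\[
\Wt_1^\pm(x, y; z) = |x\pm y|^{-z} \int_0^\infty v^{z}\, \Psi(v)\, \V(x, y; v)\, dv.
\]
Because $\Psi$ is smooth and compactly supported in $[1,2]$, the $v$-integral runs over a bounded interval and differentiation under the integral sign is trivially justified; hence $z \mapsto \Wt_1^\pm(x, y; z)$ is entire. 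This immediately yields the first claim. The Mellin inversion formula (\ref{eqn:mellinU}) is then the standard inversion theorem applied to a compactly supported smooth function in the variable $u$, justified by the rapid decay in $\mathrm{Im}\, z$ on any vertical line (which follows from the bound to be established).

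For the bound, the factor $|x\pm y|^{-z}$ already produces the $|x\pm y|^{-\tRe z}$ outside. Inside the $v$-integral, Lemma~\ref{lem:weightWV} furnishes $\V(x, y; v) \ll \exp(-c \max(x,y)^{1/2})$ uniformly for $v \in [1,2]$, which is stronger than the claimed $\exp(-c_1 \max(x,y)^{1/4})$. To extract the factor $\prod_{j=1}^\nu |z+j|^{-1}$, I would integrate by parts $\nu$ times, each time writing $v^{z+j-1}\, dv = d(v^{z+j}/(z+j))$ and transferring the derivative onto $\Psi(v)\V(x, y; v)$. The boundary terms vanish because $\Psi$ is compactly supported away from $0$ and $\infty$. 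After $\nu$ integrations one obtains
\[
\Wt_1^\pm(x, y; z) = \frac{(-1)^\nu\, |x\pm y|^{-z}}{(z+1)(z+2)\cdots (z+\nu)} \int_0^\infty v^{z+\nu}\, \frac{d^\nu}{dv^\nu}\!\left[\Psi(v)\V(x, y; v)\right] dv,
\]
and applying absolute values finishes the estimate, provided the $\nu$‑th derivative of $\V(x,y;v)$ in $v$ still exhibits the required exponential decay.

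The one step that deserves actual verification, and which I expect to be the only mildly technical point, is controlling $\partial_v^\nu \V(x, y; v)$. From the representation $\V(x, y; v) = \int_{-\infty}^\infty (y/x)^{it} W(xy\pi^4/v^4, t)\, dt$, each $v$‑derivative pulls down a factor polynomial in $xy/v^5$ and replaces $W$ by a higher derivative $W^{(\mu)}$. Lemma~\ref{lem:weightWV} gives $W^{(\mu)}(x,t) \ll_\mu \exp(-c_0 x^{1/4})$ for $x>1$, so each $v$‑differentiation loses only polynomial factors in $\max(x,y)$ that are absorbed by the exponential decay; the $t$‑integration is controlled by the $\Gamma$‑factors inherent in $W$ exactly as in the original estimate. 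This is precisely the argument carried out in Section~7 of \cite{CIS}, with Lemma~\ref{lem:weightWV} replacing the analogous decay estimate there; so no new ideas beyond bookkeeping are required.
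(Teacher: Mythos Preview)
Your proposal is correct and follows essentially the same approach as the paper, which simply defers to Section~7 of \cite{CIS} with the bounds of Lemma~\ref{lem:weightWV} substituted in. (Your invocation of (\ref{eqn:Vc}) in the change of variables is a red herring --- the substitution $v=u|x\pm y|$ is immediate from the definition of $\W^\pm$ alone --- but this is harmless.)
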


\begin{lemma} \label{lem:MellinXY}
Given a positive real number $u,$ we define
$$ \Wt^\pm_2 (s_1, s_2; u) = \int_0^\infty \int_0^\infty \W^\pm(x, y ; u) x^{s_1}y^{s_2} \frac{dx}{x} \frac{dy}{y}.$$
Then the functions $\Wt^\pm_2(s_1, s_2 ; u)$ are analytic in the region $\tRe (s_1), \tRe(s_2) > 0$. We have the Mellin inversion formula
$$ \W^\pm(x, y ; u) = \frac{1}{(2\pi i)^2} \int_{(c_1)}\int_{(c_2)} \Wt^\pm_2 (s_1, s_2 ; u) x^{-s_1} y^{-s_2} \>d s_1 \> d s_2,$$
where $c_1, c_2$ are positive. The Mellin transforms $\Wt^\pm_2(s_1,s_2 ; u)$ satisfy, for any $k \geq 1$
$$ |\Wt^\pm_2(s_1, s_2; u)| \ll \frac{(1 + u)^{k-1}}{\max(|s_1|, |s_2|)^k} \exp\left(-c_1 u^{-1/4}\right).$$
\end{lemma}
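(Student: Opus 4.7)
The plan is to follow the strategy of Lemma 7.2 in Conrey--Iwaniec--Soundararajan \cite{CIS}, replacing the decay bounds there with those from our Lemma \ref{lem:weightWV}. The argument has three parts: analyticity in the product strip $\tRe s_1, \tRe s_2 > 0$, Mellin inversion, and the quantitative bound via repeated integration by parts.

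Since $\Psi$ is supported in $[1, 2]$, the function $\W^\pm(x, y; u)$ vanishes outside the region $\{u|x \pm y| \in [1, 2]\}$. On this support we have $u|x \pm y| \asymp 1$, so Lemma \ref{lem:weightWV} yields $V(x, y; u|x \pm y|) \ll \exp(-c \max(x, y)^{1/2})$ for some positive constant $c$, producing exponential decay of $\W^\pm$ in $\max(x, y)$. Combined with the local integrability of $x^{s_1 - 1} y^{s_2 - 1}$ at the origin for $\tRe s_i > 0$, the integral defining $\Wt^\pm_2(s_1, s_2; u)$ converges absolutely and is analytic in this strip. Mellin inversion in each variable then follows from the standard theorem.

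For the quantitative bound, assume without loss of generality that $|s_1| \geq |s_2|$, so $\max(|s_1|, |s_2|) = |s_1|$. For each fixed $y > 0$, the function $\W^\pm(\cdot, y; u)$ is smooth and compactly supported in $x > 0$, and the factor $x^{s_1}$ kills the boundary contribution at $x = 0$ when $\tRe s_1 > 0$. I would therefore integrate by parts $k$ times in $x$ to obtain
\begin{equation*}
\Wt^\pm_2(s_1, s_2; u) = \frac{(-1)^k}{s_1(s_1+1) \cdots (s_1 + k - 1)} \int_0^\infty \int_0^\infty \partial_x^k \W^\pm(x, y; u) \, x^{s_1 + k - 1} y^{s_2 - 1} \, dx \, dy,
\end{equation*}
which produces the prefactor $\ll \max(|s_1|, |s_2|)^{-k}$.

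It remains to bound the inner integral. Writing $\W^\pm = g(u|x\pm y|) V(x, y; u|x\pm y|)$ with $g(t) = t \Psi(t)$ smooth and compactly supported, the chain and product rules express $\partial_x^k \W^\pm$ as a finite sum of terms of the form $u^{a} g^{(a)}(u|x\pm y|)$ times mixed partial derivatives of $V$. Each such term is uniformly bounded on the support by Lemma \ref{lem:weightWV} applied to $W$ and its derivatives, and careful bookkeeping combined with the constraint $u|x \pm y| \asymp 1$ yields $|\partial_x^k \W^\pm| \ll (1+u)^{k-1}$ on the support. Since $\max(x, y) \geq |x \pm y|/2 \gg 1/u$ there, the exponential decay of $V$ gives a factor $\exp(-c u^{-1/2}) \ll \exp(-c_1 u^{-1/4})$, which absorbs any residual polynomial-in-$1/u$ losses arising from the $x$- and $y$-integrations over the bounded support. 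Assembling these estimates yields the claimed bound. The principal obstacle is the chain-rule bookkeeping for $V$ in its three arguments, but each resulting term is uniformly controlled on the support by the derivative estimates in Lemma \ref{lem:weightWV}.
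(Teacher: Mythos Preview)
Your approach is essentially the same as the paper's: the paper gives no detailed argument for this lemma, stating only that ``the proofs of the lemmas are the same as the ones in Section~7 of \cite{CIS}, but using the bounds of Lemma~\ref{lem:weightWV} instead,'' which is precisely the plan you describe. Your sketch of the integration-by-parts mechanism and the role of the support constraint $u|x\pm y|\asymp 1$ is accurate and matches the CIS argument.
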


\begin{lemma} \label{lem:MellinXYU} We define
$$ \Wt^\pm_3 (s_1, s_2; z) = \int_0^\infty \int_0^\infty \W^\pm(x, y ; u) u^z x^{s_1}y^{s_2} \frac{du}{u} \frac{dx}{x} \frac{dy}{y},$$
and 
$$ \Wt_3 (s_1, s_2; z) = \Wt^+_3 (s_1, s_2; z) + \Wt^-_3 (s_1, s_2; z).$$
Let $\omega = \frac{s_1 + s_2 - z}{2}$ and $\xi = \frac{s_1 - s_2 + z}{2}.$ For $\tRe(s_1), \tRe(s_2) > 0,$ and $|\tRe(s_1 -s_2)| < \tRe(z) < 1$ we have
\begin{equation} \label{eqn:mellinthree}
\Wt_3(s_1, s_2; z) = \frac{\psit(1 + 4\omega + z)}{2\omega \pi^{4\omega}} \int_{-\infty}^{\infty} \Hc (\xi - it, z) G\left(\h + \omega, t \right) \> dt, 
\end{equation}
where $\psit$ is defined in (\ref{eqn:MellinPsi}), and 
$$ \Hc (u, v) = \pi^{1/2} \frac{\Gamma\left(\tfrac{u}{2} \right)\Gamma\left(\tfrac{1-v}{2} \right)\Gamma\left(\tfrac{v-u}{2} \right)}{\Gamma\left(\tfrac{1-u}{2} \right)\Gamma\left(\tfrac{v}{2} \right)\Gamma\left(\tfrac{1-v + u}{2} \right)}.$$
Moreover we have the Mellin inversion formula
$$ \W^\pm(x, y ; u) = \frac{1}{(2\pi i)^3} \int_z \int_{s_1}\int_{s_2} \Wt^\pm_2 (s_1, s_2 ; z) u^{-z} x^{-s_1} y^{-s_2} \>d s_2 \> d s_1 \> dz, $$
where all of the paths are taken to be the vertical lines with increasing imaginary parts and real parts satisfying the constraints given above, and the integrals over $s_1$ and $s_2$ are to be interpreted as being over $|{\rm Im} (s_1)| \leq T_1$ and $|{\rm Im} (s_s)| \leq T_s$ and letting $T_1, T_2$ tend to infinity. Finally the Mellin transform $\Wt_3(s_1, s_2;z)$ satisfies the bound
\begin{equation} \label{eqn:boundWt3}
|\Wt_3(s_1, s_2;z)| \ll (1 + |z|)^{-A} (1 + |\omega|)^{-A} (1 + |\xi|)^{\tRe(z) - 1}.
\end{equation}
\end{lemma}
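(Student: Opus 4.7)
The plan is to unfold the definitions of $\W^\pm$ and $\V$ inside $\Wt_3^\pm$, perform successive changes of variable to separate the three integrations, and recognise the resulting pieces as the Mellin transform of $\Psi$, the Mellin transform of $W$, and an angular integral that collapses (after summing the two $\pm$ contributions) to $\Hc$.

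First I would substitute $\W^\pm(x,y;u) = u|x\pm y|\Psi(u|x\pm y|)\V(x,y;u|x\pm y|)$ into $\Wt_3^\pm$ and change variable $v=u|x\pm y|$ in the $u$-integral. This decouples $\Psi$ from $x,y$, and the scaling identity (\ref{eqn:Vc}) then lets me substitute $x=v^2a$, $y=v^2 b$ so that $\V(v^2 a, v^2 b; v)=\V(a,b;1)$ and the $v$-integral fully detaches. Collecting the powers of $v$ arising from $u|x\pm y|$, $u^z$, $x^{s_1}y^{s_2}$ and $|x\pm y|^{-z}$, one finds the $v$-integral evaluates to $\psit(1+4\omega+z)$ with $\omega=(s_1+s_2-z)/2$, producing exactly the prefactor on the right of (\ref{eqn:mellinthree}), while the residual double integral in $(a,b)$ is multiplied by $\pi^{-4\omega}$ after absorbing the $\pi^4$ from (\ref{eqnV}) via $a\mapsto \pi^{-2}a$, $b\mapsto \pi^{-2}b$.

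Next, inside this residual double integral I would insert (\ref{eqnV}) and pass to coordinates $w=ab$ and $\theta=\log(b/a)$, under which $\V(a,b;1)=\int W(w,t)e^{i\theta t}\,dt$ and the integrand factorises. The $w$-integral is the Mellin transform of $W(\cdot,t)$, which by direct Mellin inversion of (\ref{eqnW}) equals $G(\tfrac12+\omega,t)/\omega$. The $\theta$-integral takes the shape $\int_{-\infty}^\infty e^{\beta\theta}|e^{\theta/2}\pm e^{-\theta/2}|^{-z}\,d\theta$ with $\beta=(s_2-s_1+2it)/2$, so that $\tfrac{z}{2}-\beta=\xi-it$. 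In the $+$ case this is the classical Beta integral $\int e^{2\beta u}\cosh^{-z}u\,du$, evaluating to $\Gamma(\tfrac{z}{2}-\beta)\Gamma(\tfrac{z}{2}+\beta)/\Gamma(z)$; in the $-$ case one splits at the singularity and obtains an expression involving $\Gamma(1-z)$ and $\Gamma$-ratios $\Gamma(\tfrac{z}{2}\pm\beta)/\Gamma(1-\tfrac{z}{2}\mp\beta)$. Adding the $+$ and $-$ contributions and applying reflection $\Gamma(z)\Gamma(1-z)=\pi/\sin(\pi z)$, the duplication formula, and the sum-to-product identities $\sin A+\sin B=2\sin\tfrac{A+B}{2}\cos\tfrac{A-B}{2}$ and $\cos A+\cos B=2\cos\tfrac{A+B}{2}\cos\tfrac{A-B}{2}$, the combined expression simplifies precisely to $\Hc(\xi-it,z)$ (with the $\sqrt\pi$ in the definition of $\Hc$ emerging from the duplication formula). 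This yields the identity (\ref{eqn:mellinthree}).

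The Mellin inversion formula is then standard threefold Mellin inversion in the joint region of absolute convergence of the iterated integrals, whose shape $\tRe(s_1),\tRe(s_2)>0$ and $|\tRe(s_1-s_2)|<\tRe(z)<1$ is dictated by the convergence of the $\theta$-integral near $\theta=0$ (for the $-$ branch) and at infinity. For the bound (\ref{eqn:boundWt3}), Stirling's formula on the six $\Gamma$-factors of $\Hc$ gives $\Hc(\xi-it,z)\ll (1+|\xi-it|)^{\tRe(z)-1}$, while Stirling on $G(\tfrac12+\omega,t)$ yields exponential decay of order $\exp(-|t+\tIm(\omega)|-|t-\tIm(\omega)|)$; combining these the $t$-integral is controlled by $(1+|\xi|)^{\tRe(z)-1}(1+|\omega|)^{-A}$, and $\psit(1+4\omega+z)$—being the Mellin transform of a smooth compactly supported function—decays faster than any polynomial in $|\tIm(\omega)|+|\tIm(z)|$ via repeated integration by parts, giving the full stated bound. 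The main obstacle I anticipate is precisely the algebraic collapse of the $\pm$ contributions to the single six-$\Gamma$ ratio $\Hc(\xi-it,z)$: the individual $+$ and $-$ evaluations have structurally different shapes (one divided by $\Gamma(z)$, the other built from $\Gamma(1-z)$ and quotients), and only after summation do the trigonometric identities conspire to produce the clean expression in the lemma; this is also what pins down the convergence strip in $\tRe(z)$.
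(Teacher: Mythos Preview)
Your proposal is correct and reconstructs precisely the argument from \cite[Section~7]{CIS} that the paper cites in lieu of a proof: the change of variable $v=u|x\pm y|$ followed by the scaling $x=v^2a$, $y=v^2b$ to peel off $\psit(1+4\omega+z)$, the coordinates $w=ab$, $\theta=\log(b/a)$ separating the Mellin transform $G(\tfrac12+\omega,t)/\omega$ from the angular integral, and the Beta-integral evaluation of the $\pm$ pieces collapsing under the reflection and duplication formulas to $\Hc(\xi-it,z)$ are exactly the steps there. The bound~(\ref{eqn:boundWt3}) via Stirling on $\Hc$ and $G$ together with the rapid decay of $\psit$ is likewise the argument of \cite{CIS}, so there is nothing to add.
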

\section{Bounding the error term $\Eg$} \label{sec:errorEg}
In this section and later, we write $\beta$ for the absolute positive constant, which may stand for different values from line to line. 

First we show that we can restrict the sum over $a$ to $a \leq 2Q$. Since  $M \neq N$, if $a > 2Q$ and $M \equiv \mp N$ (mod $abh$) then
$$ \frac{Q d}{gh \lotwo} \frac{|gM \pm gN| \lotwo}{Q^2} \geq \frac{dab}{Q} \geq 2,$$ 
so $\W^\pm\left( \frac{gM \lotwo}{Q^2}, \frac{gN \lotwo}{Q^2}; \frac{Qd}{gh\lotwo}\right) = 0$. Therefore

\begin{align*} 
\mathcal{EG}(\Psi, Q) =  \mathcal{EG}_1(\Psi, Q) - \mathcal{EG}_2(\Psi, Q), 
\end{align*}
where
\begin{align*}
\mathcal{EG}_1(\Psi, Q) = \frac{Q}{2} \sum_{\substack{m, n = 1\\ m \neq n}}^{\infty} & \frac{\tau_4(m) \tau_4(n)}{\sqrt{mn}}\sum_{\substack{d \leq D \\ (d, gMN) = 1}}   \sum_{\substack{(a, g)=1 \\ a \leq 2Q}} \sum_{b|g} \sum_{\substack{h > 0 \\ (abh, MN) = 1}}  \frac{\mu(d) \mu(a) \mu(b) }{ad \phi(abh)} \\
&\times \sum_{\substack{\chi \ ({\rm mod} \ abh) \\ \chi \neq \chi_0}} \chi(M) \cb(\mp N) \W^\pm \left(\frac{gM \lotwo}{Q^2}, \frac{gN \lotwo}{Q^2} ; \frac{Qd}{gh \lotwo}\right),
\end{align*}
and 
\begin{align*}
\mathcal{EG}_2(\Psi, Q) = \frac{Q}{2} \sum_{\substack{m, n = 1\\ m \neq n}}^{\infty} & \frac{\tau_4(m) \tau_4(n)}{\sqrt{mn}}\sum_{\substack{d \leq D \\ (d, gMN) = 1}}   \sum_{\substack{(a, g)=1 \\ a > 2Q}} \sum_{b|g} \sum_{\substack{h > 0 \\ (abh, MN) = 1}}  \frac{\mu(d) \mu(a) \mu(b) }{ad \phi(abh)} \\
&\times  \W^\pm \left(\frac{gM \lotwo}{Q^2}, \frac{gN \lotwo}{Q^2} ; \frac{Qd}{gh \lotwo}\right).
\end{align*}
Since $\Psi$ is supported in $[1,2],$ $\frac{|M \pm N|d}{Qh} \in [1, 2].$ From Lemma \ref{lem:weightWV}, we have that
$$\W^\pm \left(\frac{gM \lotwo}{Q^2}, \frac{gN \lotwo}{Q^2} ; \frac{Qd}{gh \lotwo}\right) \ll \exp\left( - c \left( \frac{\max (m, n) \lotwo}{Q^2}\right)\right),$$
so $\W^\pm \left(\frac{gM \lotwo}{Q^2}, \frac{gN \lotwo}{Q^2} ; \frac{Qd}{gh \lotwo}\right)$ is small unless $m, n \leq \frac{Q^2}{\lon}.$
Moreover, $\phi(abh) \gg abh \log \log (abh),$ and $\sum_{m \leq x} \frac{\tau_4(m)}{\sqrt m} \ll \sqrt x (\log x)^3.$ Therefore
\begin{align}  \label{eqn:EG2}
\begin{split}
\mathcal{EG}_2(\Psi, Q)  &\ll Q (\log Q)^{\epsilon}\sum_{m, n \leq \frac{Q^2}{\lon}} \frac{\tau_4(m) \tau_4(n)}{\sqrt{m n}} \sum_{d \leq D} \sum_{a > 2Q} \sum_{b \leq Q^2} \sum_{h \leq Q/d} \frac{1}{a^2bdh}  \\
&\ll Q(\log Q)^{\epsilon} \sum_{m, n \leq \frac{Q^2}{\lon}} \frac{\tau_4(m) \tau_4(n)}{\sqrt{m n}}\sum_{a > 2Q} \frac{1}{a^2} \ll \frac{Q^2}{(\log Q)^{\alpha - 1}},
\end{split}
\end{align}
which is an acceptable error term. Next we will show that $\mathcal{EG}_1(\Psi, Q)$ gives an acceptable error term as in the following Lemma.

\begin{lem} \label{lem:EG1bound} Assume GRH. We have 

$$\mathcal{EG}_1(\Psi, Q) \ll \frac{Q^2}{\log Q}.$$
\end{lem}

\begin{proof}
 By Lemma \ref{lem:MellinXY}, we can write $\mathcal{EG}_1(\Psi, Q)$ as
\begin{align*}
\frac{Q}{2} \sum_{\substack{a \leq 2Q \\ b, h > 0}} \sum_{\substack{\chi \ ({\rm mod} \ abh) \\ \chi \neq \chi_0}}& \sum_{\substack{g \\ b|g, (a,g) = 1}} \sum_{\substack{d \leq D \\ (d, g) = 1}} \frac{\mu(a) \mu(b) \mu(d)}{adg\phi(abh)} \frac{1}{(2\pi i)^2} \int_{(1/2 + \epsilon)} \int_{(1/2 + \epsilon)} \Wt^\pm_2 \left(s_1, s_2; \frac{Qd}{gh \lotwo}\right) \\
& \times \left(\frac{Q^2}{g \lotwo}\right)^{s_1 + s_2} \sum_{\substack{M, N = 1 \\ M \neq N, (M, N) = 1 \\ (MN, d) = 1} }^{\infty} \frac{\tau_4(gM)\tau_4(gN)}{M^{1/2 + s_1} N^{1/2 + s_2}} \chi(M) \cb(\mp N) \> ds_1 \> ds_2.
\end{align*}
We write the inner sum over $M$ and $N$ as
\begin{align*}
\chi(\mp 1) \left( L^4(1/2+s_1, \chi)L^4(1/2+s_2, \cb) \lambda(g, s_1, s_2, \chi) \theta (g, s_1, s_2, \chi ; d)- \tau_4^2(g)\right),
\end{align*}
where $\lambda(g, s_1, s_2, \chi)$ is holomorphic when $\tRe(s_1), \tRe(s_2) > \epsilon$. If $\tRe(s_1), \tRe(s_2) = \frac{100}{\log Q},$ $\lambda(g, s_1, s_2, \chi)$ is 
\begin{align*}
&\prod_p\left(1+\sum_{k, l\geq 1} \frac{\chi(p^k)\tau_4(p^k)}{p^{k(1/2+s_1)}}\frac{\overline{\chi(p^l)}\tau_4(p^l)}{p^{l(1/2+s_2)}}\left(1+ \sum_{k\geq 1} \frac{\chi(p^k)\tau_4(p^k)}{p^{k(1/2+s_1)}} + \sum_{k\geq 1} \frac{\overline{\chi(p^k)}\tau_4(p^k)}{p^{k(1/2+s_2)}}\right)^{-1} \right)^{-1}\\
&\ll (\log Q)^{16}.
\end{align*}

Also, for these values of $s_1, s_2,$ $\theta(g, s_1, s_2, \chi; d)$ is 
\begin{align*}
& \tau_4(g) \prod_{p|dg} \left( 1+\sum_{k\geq 1} \frac{\chi(p^k)\tau_4(p^k)}{p^{k(1/2+s_1)}} + \sum_{k\geq 1} \frac{\overline{\chi(p^k)}\tau_4(p^k)}{p^{k(1/2+s_2)}}\right)^{-1}  \prod_{p^r||g} \left( \sum_{k\geq 0} \frac{\tau_4(p^{r+k})}{p^{k(1/2+s_1)}}+ \sum_{k\geq 1} \frac{\tau_4(p^{r + k})}{p^{k(1/2+s_2)}} \right) \\
&\ll \tau^3 (dg)\tau_4^2(g),
\end{align*}
where $\tau(n)$ is the usual divisor function. Since $\chi$ is not principal, we can shift contours to $\tRe (s_1) = \tRe(s_2) = \frac{100}{\log Q}$ without passing any poles.  There, $\mathcal{EG}_1(\Psi, Q)$ is bounded by
\begin{align}
\label{eqn:offdiag}
&\ll Q (\log Q)^{16} \sum_{\substack{a \leq 2Q \\ b, h > 0}} \sum_{\substack{\chi \ ({\rm mod} \ abh) \\ \chi \neq \chi_0}} \sum_{\substack{g \\ b|g, (a,g) = 1}} \sum_{\substack{d \leq D \\ (d, g) = 1}}  \frac{\tau^3(d)\tau^3(g)\tau_4(g) }{ad g\phi(abh)} \notag \\
& \times \int_{(\frac{100}{\log Q})}\int_{(\frac{100}{\log Q})} \left(\left|L^4(1/2+s_1, \chi)L^4(1/2+s_2, \overline{\chi})\right| + \tau_4^2 (g) \right) \left|\Wt_2^\pm\left(s_1, s_2; \frac{Qd}{gh \lotwo}\right)\right| ds_1 ds_2.
\end{align} 
From Lemma \ref{lem:MellinXY}, we have that for any $k \geq 1$ and $t = 1$ or 3,
\begin{align}
\label{eqn:offdiagdivsum}
\begin{split}
\sum_{\substack{g \\ b|g}} & \frac{\tau^3(g) \tau_4^t(g)}{g} \sum_{d \leq D}\frac{\tau^3(d)}{d}  \left|\widetilde{W}_2\left(s_1, s_2, \frac{Qd}{gh\lotwo}\right)\right|  \\
&\ll  \frac{\left(1 + \frac{QD}{bh\lotwo}\right)^{k-1}}{\max(|s_1|, |s_2|)^k} (\log D)^6\sum_{\substack{g \\ b|g}} \frac{\tau^3(g) \tau_4^t(g)}{g} \exp\left(-c \left( \frac{gh\lotwo}{QD}\right)^{1/4}\right) \\
&\ll \frac{\left(1 + \frac{QD}{bh\lotwo}\right)^{k-1}}{\max(|s_1|, |s_2|)^k} \frac{(\log Q)^{\beta} \tau^3(b) \tau_4^3(b) }{b}\exp\left(-c \left( \frac{bh\lotwo}{QD}\right)^{1/4}\right).
\end{split}
\end{align}

Now examine dyadic intervals $S_1\leq |s_1| < 2S_1$, $S_2\leq |s_2| < 2S_2$, $A \leq a < 2A,$ $B \leq b < 2B,$ and $H \leq h < 2H.$ We write $\ell = abh.$  Note that $\phi(abh) \gg abh \log\log (abh)$, and $\tau^3(b) \tau_4^3(b) \leq \tau^3(\ell)\tau_4^3(\ell)$.  Thus, the contribution to such a dyadic block is $Q (\log Q)^{\beta}$ times a quantity
\begin{align}
\label{eqn:offdiagdiadic}
\begin{split}
&\ll  \frac{\left(1 + \frac{QD}{BH\lotwo}\right)^{k-1} \log^\epsilon (ABH)}{A^2B^2H\max(S_1, S_2)^k} \exp\left(-c \left( \frac{BH\lotwo}{QD}\right)^{1/4}\right) \sum_{ABH \leq \ell < 8ABH} \tau^6(\ell) \tau_4^3(\ell) \\
&\times \sum_{\substack{\chi \ ( {\rm mod} \ \ell) \\ \chi \neq \chi_0}} \int_{S_1 \leq |s_1| < 2S_1} \int_{S_2 \leq |s_s| < 2S_2} \left( 1 + \left|L^4(1/2+s_1, \chi)L^4(1/2+s_2, \overline{\chi})\right|\right)  \>ds_1\>ds_2.
\end{split}
\end{align}
Let $S = \max(S_1, S_2)$.  By Proposition \ref{thm:8momentIntandSumoverq},
\begin{align}
\label{eqn:offdiag8m}
\begin{split}
&\sum_{\substack{\chi \ ( {\rm mod} \ \ell) \\ \chi \neq \chi_0}} \int_{S_1 \leq |s_1| < 2S_1} \int_{S_2 \leq |s_s| < 2S_2} \left( 1 + \left|L^4(1/2+s_1, \chi)L^4(1/2+s_2, \overline{\chi})\right|\right)  \>ds_1\>ds_2 \\
&\leq \sum_{\substack{\chi \ ( {\rm mod} \ \ell) \\ \chi \neq \chi_0}} \int_{S_1 \leq |s_1| < 2S_1} \int_{S_2 \leq |s_s| < 2S_2} \left( 1 + \left|L(1/2+s_1, \chi)\right|^8+\left|L(1/2+s_2, \overline{\chi})\right|^8\right)  \>ds_1\>ds_2 \\
&\ll lS_1S_2 \log^{16+\epsilon}(l(S+1)).
\end{split}
\end{align}

We let $\sumd$ denote a dyadic sum. By (\ref{eqn:offdiagdivsum}), (\ref{eqn:offdiagdiadic}), and (\ref{eqn:offdiag8m}) above, (\ref{eqn:offdiag}) is bounded by $Q\log^\beta Q$ times

\begin{align*}
& \sumd_{A, B, H, S} \frac{\left(1 + \frac{QD}{BH\lotwo}\right)^{k-1} \log^{16+\epsilon} (ABH(S+1))}{ABS^{k-2}} \exp\left(-c \left( \frac{BH\lotwo}{QD}\right)^{1/4}\right) \times\\
& \hskip 2.5in \times \sum_{ABH \leq \ell < 8ABH} \tau^6(\ell) \tau_4^3(\ell)\\
&\ll \sumd_{B, H, S} \frac{H\left(1 + \frac{QD}{BH\lotwo}\right)^{k-1} \log^{\beta} ((S+1)QBH)}{S^{k-2}} \exp\left(-c \left( \frac{BH\lotwo}{QD}\right)^{1/4}\right),
\end{align*}where we have removed the sum over $A$ by using that $A\leq 2Q$.
We take $k = 1$ if $S \leq 1 + \frac{QD}{BH \lotwo}$ and $k =4$ otherwise. The contribution from $S \leq 1 + \frac{QD}{BH \lotwo}$ is 
\begin{align*}
&\ll \sumd_{B, H } H\left(1 + \frac{QD}{BH\lotwo}\right) \log^{\beta} (QBH) \exp\left(-c \left( \frac{BH\lotwo}{QD}\right)^{1/4}\right) \\
&\ll \sumd_{L} L \left(1 + \frac{QD}{L\lotwo}\right)\log^{\beta} (QL) \exp\left(-c \left( \frac{L\lotwo}{QD}\right)^{1/4}\right) \\
&\ll \frac{QD}{(\log Q)^{2\alpha - \beta}}.
\end{align*}
When $S > 1 + \frac{QD}{BH \lotwo}$, picking $k=4$ and excuting the dyadic sum over $S$ gives that the contribution is also bounded by
\begin{align*}
\sumd_{B, H } H\left(1 + \frac{QD}{BH\lotwo}\right) \log^{\beta} (QBH) \exp\left(-c \left( \frac{BH\lotwo}{QD}\right)^{1/4}\right) \ll \frac{QD}{(\log Q)^{2\alpha - \beta}}.
\end{align*}
To summarize our work, we have obtained that $ \mathcal{EG}_1(\Phi, Q) \ll \frac{Q^2D}{(\log Q)^{2\alpha - \beta}}.$  If $D = (\log Q)^\delta$ for a fixed number $\delta$ independent of $\alpha$, then 
$$ \mathcal{EG}_1(\Phi, Q) \ll \frac{Q^2}{\log Q},$$
provided we pick $\alpha$ large enough. 
 
\end{proof}

From (\ref{eqn:EG2}) and Lemma \ref{lem:EG1bound}, assuming GRH, we obtain that
\begin{align} \label{eqn:summaryEG}
 \mathcal{EG}(\Phi, Q) \ll \frac{Q^2}{\log Q}.
\end{align}

\section{Evaluating $\mathcal{MS}(\Psi, Q) + \Mg$ } \label{sec:MsplusMg}
We recall that $\mathcal{MS}(\Psi, Q)$ and $\Mg$ are defined in
(\ref{eqn:mainMS}) and  (\ref{eqn:maintermG}) respectively. To evaluate $\mathcal{MS}(\Psi, Q) + \Mg$, we use the Mellin transform of $\Wt_1^\pm$ in Lemma \ref{lem:MellinU} to write $\mathcal{MG}(\Psi, Q)$ in term of an integral on the vertical line with real part $-\epsilon < 0$.  Shifting the contour to $\R(z) = \epsilon > 0$, we pick up a pole at $z = 0.$  We will show that the contribution of this pole cancels with $\mathcal{MS}(\Psi, Q)$.

\begin{lem} \label{lem:MSplusMG} We have
\begin{align} \label{eqn:maintermsumMSandMG} 
\begin{split}
\mathcal{MS}(\Phi, Q) + \Mg = \frac{Q}{2} &\sum_{\substack{m, n = 1}}^{\infty} \frac{\tau_4 (m) \tau_4 (n)}{\sqrt{mn}}  \frac{1}{2\pi i} \int_{(\epsilon)} \Wt^\pm_1\left(\frac{m \lotwo}{Q^2}, \frac{n \lotwo}{Q^2} ; z\right) \\
&\times \frac{\zeta(1 - z) \F(-z, g, MN)}{\zeta(1 + z) \phi(gMN, 1+ z)} \left(\frac{Q}{g \lotwo}\right)^{-z} \> dz + O\left( \frac{Q^2}{\log Q}\right).
\end{split}
\end{align}
\end{lem} 

\begin{proof}
We first simpify the sum over $a, b$ in $\Mg$ by letting $r = ab$ so that
$$\mathcal{A}(h, g, MN) = \sum_{(a,gMN) = 1} \sum_{\substack{b|g \\ (b, MN)}}\frac{\mu(a)\mu(b)}{a\phi(abh)} = \sum_{(r, MN) = 1} \frac{\mu(r) (r, g)}{r\phi(rh)}.$$
Next we will evaluate sum over $h,$ which is given by
\begin{align*} 
\sum_{(h, MN) = 1} \mathcal{A}(h, g, MN)   \W^\pm \left(\frac{gM \lotwo}{Q^2}, \frac{gN \lotwo}{Q^2} ; \frac{Qd}{gh \lotwo}\right).
\end{align*}
Using the Mellin transform of $\Wt_1^\pm$ given in Lemma \ref{lem:MellinU} with $c = - \epsilon < 0$, we obtain that the above sum is
\begin{equation} \label{eqn:maintermGsumoverh}
\sum_{\substack{h = 1 \\(h, MN) = 1}}^{\infty} \mathcal{A}(h, g, MN)   \frac{1}{2\pi i} \int_{(-\epsilon)} \Wt_1^\pm \left(\frac{gM \lotwo}{Q^2}, \frac{gN \lotwo}{Q^2} ; z\right) \left(\frac{Qd}{gh \lotwo}\right)^{-z} \> dz.
\end{equation}
We can interchange the sum and the integral since the sum over $h$ is absolutely convergent for $\Re (z)< 0$. Writing out the Euler product, we obtain that
$$ \sum_{\substack{h = 1 \\(h, MN) = 1}}^{\infty} \frac{\mathcal{A}(h, g;MN)}{h^s} = \zeta(s+1) \F(s, g, MN),$$
where 
\begin{equation} \label{eqn:defofcalF}
\F(s, g, MN) = \phi(MN, s + 1) \prod_{p \nmid gmN} \left(1 - \frac{1}{p(p-1)} + \frac{1}{p^{1 + s}(p-1)}\right) \prod_{\substack{p|g \\ p \nmid MN}} \left( 1 - \frac{1}{p^{1 + s}} - \frac{1}{p-1} \left(1 - \frac{1}{p^s}\right)\right),
\end{equation} 
and 
$$ \phi(r, s)= \prod_{p | r} \left( 1 - \frac{1}{p^s}\right).$$
Therefore (\ref{eqn:maintermGsumoverh}) is
\begin{align}  \label{eqn:sumoverHsimplify}
&\frac{1}{2\pi i} \int_{(-\epsilon)} \Wt^\pm_1\left(\frac{gM \lotwo}{Q^2}, \frac{gN \lotwo}{Q^2} ; z\right) \zeta(1 - z) \F(-z, g, MN) \left(\frac{Qd}{g \lotwo}\right)^{-z} \> dz \\
&= -({\rm Residue \ at} \ z = 0) + \nonumber\\
& \ \ + \frac{1}{2\pi i} \int_{(\epsilon)} \Wt^\pm_1\left(\frac{gM \lotwo}{Q^2}, \frac{gN \lotwo}{Q^2} ; z\right) \zeta(1 - z) \F(-z, g, MN) \left(\frac{Qd}{g \lotwo}\right)^{-z} \> dz. \nonumber
\end{align}
By the definition of $\mathcal F(0, g, MN)$, we have that $-({\rm Residue \ at} \ z = 0)$ is  
\begin{align*} 
&\Wt^\pm_1\left(\frac{gM \lotwo}{Q^2}, \frac{gN \lotwo}{Q^2} ; 0\right) \zeta(1 - z) \F(0, g, MN) \nonumber \\
&= \frac{\phi(mn)}{mn} \int_0^\infty  \Psi(u) V\left(\frac{m \lotwo}{Q^2}, \frac{n \lotwo}{Q^2} ; u\right) du.
\end{align*}
Therefore the contribution to the residue term at $z = 0$ is
\begin{equation} \label{eqn:residue0ofMg}
 Q \sum_{\substack{m, n = 1  \\ m \neq n}} \frac{\tau_4 (m) \tau_4 (n)}{\sqrt{mn}} \sum_{\substack{d \leq D \\ (d, mn) = 1}}\frac{\mu(d)}{d}\frac{\phi(mn)}{mn} \int_0^\infty  \Psi(u) V\left(\frac{m \lotwo}{Q^2}, \frac{n \lotwo}{Q^2} ; u\right) du.
\end{equation}

We will show that the sum 
\begin{equation} \label{eqn:MSMgcancel}
 \textrm{Equation (\ref{eqn:residue0ofMg})} \  + \mathcal{MS}(\Psi, Q) = O\left(\frac{DQ^2(\log Q)^{\beta}}{(\log Q)^{\alpha/2}}\right).
\end{equation}  
The bound above is an acceptable error term when we choose appropriate values for $D$ and $\alpha.$ We recall the definition of $\mathcal {MS}(\Psi, Q)$ is
$$
- \sum_{\substack{m, n = 1 \\ m \neq n}} \frac{\tau_4 (m) \tau_4 (n)}{\sqrt{mn}} \sum_{\substack{d \leq D \\ (d, mn) = 1}} \mu(d) \sum_{\substack{(r, mn) = 1}}\Psi \left( \frac{dr}{Q}\right)  V\left(m, n ; \frac{dr}{\lon} \right).$$
By elementary arguments, we have
$$ \sum_{\substack{r \leq x \\ (r, mn) = 1}} 1 = \frac{\phi(mn)}{mn} x + O(\tau(mn)), $$
where $\tau$ is the divisor function. By partial summation, the main contribution from $\mathcal {MS}(\Psi, Q)$ is 
\begin{align*}
&-Q \sum_{\substack{m, n = 1 \\ m \neq n}} \frac{\tau_4 (m) \tau_4 (n)}{\sqrt{mn}} \sum_{\substack{d \leq D \\ (d, mn) = 1}} \frac{\mu(d)}{d} \frac{\phi(mn)}{mn} \int_0^\infty \Psi(u) V\left(m, n ; \frac{uQ}{\lon} \right) \> du = -\textrm{ Equation (\ref{eqn:residue0ofMg})}
\end{align*}
by relation (\ref{eqn:Vc}). Since $V$ is small unless $\max (m,n) \leq \frac{Q^2}{(\log Q)^{\alpha/2}}$, we then obtain that $\mathcal{MS}(\Psi, Q)$ + Equation (\ref{eqn:residue0ofMg}) is bounded by $\frac{D Q^{2}(\log Q)^{\beta}}{(\log Q)^{\alpha/2}}$, where $\beta$ does not depend on $\alpha$, and this proves (\ref{eqn:MSMgcancel}).
\\

Therefore the main term of $\mathcal{MS}(\Psi, Q) + \Mg$ is
\begin{align*}
\frac{Q}{2} \sum_{\substack{m, n = 1 \\ m \neq n}} \frac{\tau_4 (m) \tau_4 (n)}{\sqrt{mn}}  \frac{1}{2\pi i} &\int_{(\epsilon)} \Wt^\pm_1\left(\frac{m \lotwo}{Q^2}, \frac{n \lotwo}{Q^2} ; z\right) \\
&\times \zeta(1 - z) \F(-z, g, MN) \left(\frac{Q}{g \lotwo}\right)^{-z}  \sum_{\substack{d \leq D \\ (d, gMN) = 1}} \frac{\mu(d)}{d^{1 + z}}\> dz . 
\end{align*}
Now we shift the contour to ${\rm Re}(z) = 1 - \frac{1}{\log Q}.$ We extend the sum over $d$ to all positive integers. By Lemma \ref{lem:MellinU}, $\Wt^\pm_1$ is small unless $m, n$ and $ g \leq Q^2$. By Equation (\ref{eqn:defofcalF}), the error term in extending is 
\begin{align} \label{error:extendD}
&\ll Q  \sum_{\substack{m, n = 1 \\ m \neq n}} \frac{\tau_4 (m)\tau(M) \tau_4 (n) \tau(N) \tau^2(g)}{\sqrt{mn}} \int_{\left(1 - \tfrac{1}{\log Q}\right)} \left| \frac{(m \pm n)\lotwo}{Q^2}\right|^{-1 +1/\log Q} |z|^{-10}  \nonumber \\
& \hskip 1.5in \times\exp \left(-c \left( \frac{\max (m,n) \lotwo}{Q^2}\right)^{1/4} \right)\left(\frac{Q}{g \lotwo}\right)^{-1 +1/\log Q}  \frac{1}{D} \ |dz| \nonumber \\
&\ll \frac{Q^2}{D}   \sum_{g \leq Q^2} \frac{\tau_4^2(g)\tau^2(g)}{g} \sum_{\substack{m, n \leq Q^2 \\ m \neq n}} \frac{\tau_4 (m) \tau(m) \tau_4(n) \tau(n)}{\sqrt{mn}} \frac{1}{|m \pm n|}.
\end{align}
Since $m + n \geq 2\sqrt{mn},$ we have
$$ \sum_{\substack{m, n \leq Q^2 \\ m \neq n}} \frac{\tau_4 (m) \tau(m) \tau_4(n) \tau(n)}{\sqrt{mn}} \frac{1}{|m + n|} \ll  \sum_{\substack{m, n \leq Q^2 }} \frac{\tau_4 (m) \tau(m) \tau_4(n) \tau(n)}{mn} \ll (\log Q)^{16}. $$ 
For the other term, we may assume by symmetry that $m<n$ and applying Cauchy-Schwarz inequality, we have
\begin{align*}
\sum_{\substack{m, n \leq Q^2 \\ m < n}} \frac{\tau_4 (m) \tau(m) \tau_4(n) \tau(n)}{\sqrt{mn} |m - n|} &\ll \sum_{\substack{m \leq Q^2}} \frac{\tau_4 (m) \tau(m) }{\sqrt{m}}   \sum_{ \ell \leq Q^2 }   \frac{\tau_4(m + \ell) \tau(m + \ell)}{\ell \sqrt{m + \ell} } \\
&\ll \sum_{\substack{\ell \leq Q^2}} \frac{1}{\ell} \left(\sum_{ m \leq Q^2 }  \frac{\tau_4^2(m) \tau^2(m) }{m}  \right)^{1/2}  \left( \sum_{ m \leq Q^2 } \frac{\tau_4^2(m + \ell) \tau^2(m + \ell)}{m + \ell }\right)^{1/2}  \\
&\ll (\log Q)^{65}.
\end{align*}
Therefore (\ref{error:extendD}) is bounded above by $\frac{Q^2(\log Q)^{129}}{D}.$

We now move the contour back to ${\rm Re} (z) = \epsilon$ and reinsert the terms $m = n$ with a negligible error of $O(Q^{1 + \epsilon})$. Hence up to the error term of $ \frac{D Q^{2}(\log Q)^{\beta}}{(\log Q)^{\alpha/2}} + \frac{Q^2(\log Q)^{129}}{D}$, \ $\mathcal{MS}(\Psi, Q) + \Mg$ is 
\begin{align*} 
\frac{Q}{2} \sum_{\substack{m, n = 1}}^{\infty} \frac{\tau_4 (m) \tau_4 (n)}{\sqrt{mn}}  \frac{1}{2\pi i} \int_{(\epsilon)} &\Wt^\pm_1\left(\frac{m \lotwo}{Q^2}, \frac{n \lotwo}{Q^2} ; z\right) \\
&\times \frac{\zeta(1 - z) \F(-z, g, MN)}{\zeta(1 + z) \phi(gMN, 1+ z)} \left(\frac{Q}{g \lotwo}\right)^{-z} \> dz. \nonumber
\end{align*}
Choosing $D = (\log Q)^{130}$ and $\alpha$ large enough, we obtain that the error term is $O\left(\frac{Q^2}{\log Q}\right).$
\end{proof}
\section{The off-diagonal contribution} \label{sec:offdiagmain}
In this section, we will evaluate the main term in  (\ref{lem:MSplusMG}) as the following Proposition. 

\begin{prop} \label{prop:mainMG} Assume the Lindel\"{o}f hypothesis. \footnote{The Lindel\"{o}f hypothesis states that $|\zeta(1/2 + it)| \ll_{\epsilon} (|t| + 1)^{\epsilon}$ and is a well known consequence of the Riemann Hypothesis.} We have that
$$  \mathcal{MS}(\Psi, Q) + \Mg = \frac{-53524}{16!}Q^2(\log Q)^{16}\frac{\psit(2)}{2} \frac{\mathcal{K}\left(\h, \h; 1 \right)}{\zeta(2)}\int_{-\infty}^{\infty} G\left(\h, t \right) \> dt + O(Q^2 (\log Q)^{15 }).$$
\end{prop}
\begin{proof}
First we use a Mellin transform in Lemma \ref{lem:MellinXYU} and obtain that (\ref{eqn:maintermsumMSandMG}) is
\begin{align} \label{eqn:sumMSandMGafterMellin3}
\frac{Q}{2} \frac{1}{(2\pi i)^3} \int_{(\epsilon)}  \int_{(1/2 + \epsilon)} \int_{(1/2 + \epsilon)}& \Wt_3(s_1, s_2 ; z) \frac{\zeta(1 - z)}{\zeta(1 + z)} \frac{Q^{2s_1 + 2s_2 - z}}{(\log Q)^{2\alpha(s_1 + s_2 - z)}}  \mathcal{J}(s_1, s_2; z) \>ds_2 \>ds_1 \> dz, 
\end{align}
where 
\begin{align*}
\mathcal{J}(s_1, s_2; z) = \sum_{\substack{m, n = 1}}^{\infty} \frac{\tau_4 (m) \tau_4 (n)}{m^{1/2 + s_1} n^{1/2 + s_2}} \frac{g^z \F(-z, g, MN)}{\phi(gMN, 1+ z)}.
\end{align*}
The coefficients above are multiplicative, so we can write $\mathcal{J}(s_1, s_2; z) = \prod_p \mathcal{J}_p(s_1, s_2; z), $
where
\begin{align} \label{eqn:prodofB}
\mathcal{J}_p(s_1, s_2; z) = &1 + \frac{p^z - 1}{p(p-1)} + \sum_{\substack{a, b \geq 0 \\ \max(a,b) \geq 1}} \frac{\tau_4(p^a)\tau_4(p^b)}{p^{a(1/2 + s_1)}p^{b(1/2 + s_2)}} p^{z\min(a,b)} \frac{1 - \tfrac{1}{p^{1 -z}}}{1 - \tfrac{1}{p^{1 + z}}} \\
&+ \frac{1}{(p-1)} \frac{p^z - 1}{ 1 - \tfrac{1}{p^{1 + z}}} \sum_{ k = 1}^\infty \frac{\tau_4^2(p^k)}{p^{k(1 + s_1 + s_2 - z)}}. \nonumber
\end{align}
Thus $\mathcal{J}(s_1, s_2; z)$ as
\begin{align*}
\zeta(2-z) \frac{\zeta^4\left(\frac{1}{2} + s_1\right)\zeta^4\left(\frac{1}{2} + s_2\right)}{\zeta^4\left(\frac{3}{2} + s_1 - z\right)\zeta^4\left(\frac{3}{2} + s_2 - z\right)} \zeta^{16}(1 + s_1 + s_2 - z) \mathcal{K} (s_1, s_2; z),
\end{align*}
where $\mathcal{K} (s_1, s_2; z) = \prod_p \mathcal{K}_p(s_1, s_2; z)$ is absolute convergent in $\tRe (s_1) > 0, \tRe (s_2) > 0$ and $\tRe (s_1 + s_2) > \tRe(z) - 1/2. $ 

Before moving the line of integration, we first truncate the integrals in $s_1$ and $s_2$ at a height $T = Q^2.$ Using the bound for $\Wt_3$ from Equation (\ref{eqn:boundWt3}), we obtain that the error term from the truncation is $O(Q^{1 + \epsilon}).$  Now we move the line of integration in $s_1$ and $s_2$ to $\tRe(s_1) = \tRe(s_2) = \frac{2}{\log Q}$, encountering poles of order four at $s_1 = 1/2$ and $s_2 = 1/2.$   Using the Lindel\"{o}f hypothesis and the bound (\ref{eqn:boundWt3}) for $\Wt_3$, the remaining integral is also $O(Q^{1 + \epsilon})$, so we need only study the contribution from the residues. 

Recall that 
$$ \Wt_3(s_1, s_2; z) = \frac{\psit(1 + 2s_1 + 2s_2 - z)}{(s_1 + s_2 - z) \pi^{2s_1 + 2s_2 - 2z}} \int_{-\infty}^{\infty} \Hc \left(\frac{s_1 - s_2 + z}{2} - it, z\right) G\left(\h + \frac{s_1 + s_2 - z}{2}, t \right) \> dt.$$

Thus the residue from $s_1 = s_2= 1/2$ is 
\begin{align} \label{eqn:resatonehalf}
&\frac{Q}{2} \frac{1}{2\pi i} \int_{(\epsilon)} \frac{\psit(3 - z)}{ \pi^{2 - 2z}}\left( \int_{-\infty}^{\infty} \Hc \left(\frac{z}{2} - it, z\right) G\left(\h + \frac{1 - z}{2}, t \right) \> dt\right) \frac{\zeta(1 - z)}{\zeta(1 + z)} \frac{\zeta(2-z) \mathcal{K}\left(\frac{1}{2}, \frac{1}{2}; z\right)}{\zeta^8(2-z)} \\
& \times  \left({\rm Res}_{s_1 = s_2 = 1/2} \frac{\zeta^4\left(\frac{1}{2} + s_1\right)\zeta^4\left(\frac{1}{2} + s_2\right) \zeta^{16}(1 + s_1 + s_2 - z)}{(s_1 + s_2 -z)} \frac{Q^{2s_1 + 2s_2 - z}}{(\log Q)^{2\alpha(s_1 + s_2 - z)}}\right) \> dz. \nonumber
\end{align}Note this contributes a factor of $\log^6 Q$ to the leading term. From the definition of $\Hc$ in Lemma \ref{lem:MellinXYU}, we have
$$ \Hc \left(\frac{z}{2} - it, z\right) = \pi^{1/2} \frac{\Gamma\left(\tfrac{z}{4} - \tfrac{it}{2} \right)\Gamma\left(\tfrac{1-z}{2} \right)\Gamma\left(\tfrac{z}{4} + \tfrac{it}{2} \right)}{\Gamma\left(\tfrac{1}{2} - \tfrac{z}{4} + \tfrac{it}{2} \right)\Gamma\left(\tfrac{z}{2} \right)\Gamma\left(\tfrac{1}{2} - \tfrac{z}{4} - \tfrac{it}{2}\right)}.$$
We now move the integration in $z$ to $\tRe(z) = \frac{3}{2} - \frac{1}{\log Q}$. The remaining integral gives an acceptable error term, and we encounter a pole of order $11$ at $z = 1$ (there is a simple pole from $\Hc \left(\frac{z}{2} - it, z\right)$, a zero of order 7 from $\frac{1}{\zeta^7(2-z)}$ and a pole of order 17 from the expression ${\rm Res}_{s_1 = s_2 = 1/2}$). Therefore the leading term of the residue at $z = 1$ has order $Q^2(\log Q)^{16}.$  By Maple (or a tedious calculation by hand), we obtain that the leading term is
\begin{align*} 
& \frac{13381}{2615348736000}Q^2(\log Q)^{16} \zeta(0) \frac{\psit(2)}{2} \frac{\mathcal{K}\left(\h, \h; 1 \right)}{\zeta(2)}\int_{-\infty}^{\infty} G\left(\h, t \right) \> dt \\
&= \frac{-53524}{16!}Q^2(\log Q)^{16}\frac{\psit(2)}{2} \frac{\mathcal{K}\left(\h, \h; 1 \right)}{\zeta(2)}\int_{-\infty}^{\infty} G\left(\h, t \right) \> dt.
\end{align*}
This concludes the proof of the proposition. 

\end{proof}
\section{Conclusion of the proof of Theorem \ref{thm:eightmoment}}
The proof of Theorem \ref{thm:eightmoment} will follow from the lemma below.

\begin{lem} Assume GRH. We have
\begin{align*}  
&\D + \mathcal{S}(\Psi, Q) + \mathcal{G}(\Psi, Q) \\
&= \frac{12012}{16!} Q^2(\log Q)^{16}\frac{\psit(s)}{2} A(1/2) \\
&  \hskip 0.3in \times \prod_p\left( 1 - \frac{1}{p}\right)\left(1 + \frac{1}{B_p(1/2)} \left( \frac{1}{p} - \frac{1}{p^2} - \frac{1}{p^3}\right)\right) \int_{-\infty}^{\infty} G(1/2, t)dt + O(Q(\log Q)^{15}).
\end{align*}
\end{lem}
\begin{proof}
From Proposition (\ref{prop:sumSm}), Equation (\ref{eqn:summaryEG}), and Proposition (\ref{prop:mainMG}), we obtain that
$$ \mathcal{S}(\Psi, Q) + \mathcal{G}(\Psi, Q) = \frac{-53524}{16!}Q^2(\log Q)^{16}\frac{\psit(2)}{2} \frac{\mathcal{K}\left(\h, \h; 1 \right)}{\zeta(2)}\int_{-\infty}^{\infty} G\left(\h, t \right) \> dt + O(Q^2 (\log Q)^{15 + \epsilon}).$$
Moreover, by Equation (\ref{eqn:eulerfordiagonal}) in Proposition \ref{sec:diagonal}, we have that $\D$, up to error term of $O(Q^2 (\log Q)^{15 + \epsilon})$, is
\begin{align*}
 2^{16}Q^2 \frac{(\log Q )^{16}}{16!} \frac{\widetilde{\Psi}(2)}{2} A(1/2)\prod_p\left( 1 - \frac{1}{p}\right)\left(1 + \frac{1}{B_p(1/2)} \left( \frac{1}{p} - \frac{1}{p^2} - \frac{1}{p^3}\right)\right) \int_{-\infty}^{\infty} G(1/2, t)dt.
\end{align*}
To derive an expression for $\mathcal{D}(\Psi, Q) + \mathcal{S}(\Psi, Q) + \mathcal{G}(\Psi, Q)$, we first show that 
$$ \frac{\mathcal{K}\left(\h, \h; 1 \right)}{\zeta(2)} = A(1/2)\prod_p\left( 1 - \frac{1}{p}\right)\left(1 + \frac{1}{B_p(1/2)} \left( \frac{1}{p} - \frac{1}{p^2} - \frac{1}{p^3}\right)\right).$$
It suffices to check that
\begin{equation} \label{eqn:checkEulerp}
\mathcal{K}_p\left(\h, \h; 1 \right)\left( 1 - \frac{1}{p^2}\right) = A_p(1/2)\left( 1 - \frac{1}{p}\right)\left(1 + \frac{1}{B_p(1/2)} \left( \frac{1}{p} - \frac{1}{p^2} - \frac{1}{p^3}\right)\right).
\end{equation}
By the definition of $A_p$, $B_p$, and using Lemma \ref{lem:eulerprod} ,$$\mathcal{K}_p\left(\h, \h; 1 \right) = \mathcal{J}_p\left(\h, \h; 1 \right)\left(1 - \frac{1}{p}\right)\left( 1 - \frac{1}{p}\right)^{16},$$
and
$$ \mathcal{J}_p\left(\h, \h; 1 \right) = 1 + \frac{1}{p} + \left( 1 - \frac{1}{p^2}\right)^{-1} \sum_{k = 1}^{\infty} \frac{\tau_4^2(p^k)}{p^k},$$
from which (\ref{eqn:checkEulerp}) follows. We then obtain the lemma.

\end{proof}
Hence, up to error term of $O(Q^2(\log Q)^{15 + \epsilon})$

\begin{align*}
& \sum_{q}\Psi \left( \frac{q}{Q}\right) \sumb_{\chiq } \int_{-\infty}^{\infty} \left| \Lambda\left(\tfrac{1}{2} + iy, \chi \right)\right|^8 \> dy = 2(\D + \mathcal{S}(\Psi, Q) + \mathcal{G}(\Psi, Q) ) \\
&= 2 \cdot \frac{12012}{16!} Q^2(\log Q)^{16}\frac{\psit(s)}{2} A(1/2) \\
& \hskip 1in \times \prod_p\left( 1 - \frac{1}{p}\right)\left(1 + \frac{1}{B_p(1/2)} \left( \frac{1}{p} - \frac{1}{p^2} - \frac{1}{p^3}\right)\right) \int_{-\infty}^{\infty} G(1/2, t)dt \\
&= \frac{24024}{16!} a_4 \sum_{q}\Psi \left( \frac{q}{Q}\right) \phib (\log q)^{16} \prod_{p | q} B_p(1/2) \int_{-\infty}^{\infty} G(1/2, t)dt. \\
\end{align*}
This concludes the proof of Theorem \ref{thm:eightmoment}.

\section{The fourth moment of Dirichlet twists of a $GL(2)$ automorphic $L$-functions } \label{sec:twistedfourthmoment}
In this section, we will discuss how to modify the proof of Theorem \ref{thm:eightmoment} to prove Theorem \ref{thm:twisted4moment}. The proof may be carried in the same manner, and indeed is slightly easier as the main term contribution comes solely from the diagonal term. 

Define 
$$G_f(s, t) := \Gamma^2\left(\frac{k - 1}{2} + s + it \right)\Gamma^2\left(\frac{k - 1}{2} + s- it\right); \ \ \ \ \ \ \sigma_f(n) = \sum_{n = n_1n_2} a(n_1) a(n_2);$$
$$W_f(x, t) := \frac{1}{2\pi i} \int_{(1)} G_f(1/2+s, t) x^{-s} \frac{ds}{s};
$$
$$ V_f(\xi, \eta; \mu) = \int_{-\infty}^{\infty} \left(\frac{\eta}{\xi}\right)^{it} W_f\left( \frac{\xi\eta (2\pi)^4}{\mu^4}, t\right) \> dt;$$
and 
$$ \Lambda_1(f \times \chi) = \sum_{m, n = 1}^{\infty} \frac{\sigma_f(m) \sigma_f(n)}{\sqrt{mn}} \chi(m) \overline{\chi}(n) V_f(m, n ;q).$$

By using the functional equation in (\ref{eqn:fncEqnoftwistedL}) and following the same arguments as Lemma \ref{prop:fnceqnLambda} , we can write
$$  \M_f = \sum_{q } \Psi\left(\frac{q}{Q}\right) \sumstar_{\chiq} \int_{-\infty}^{\infty} \left| \Lambda\big( \frac{1}{2} + iy, f\times\chi \big)\right|^4 \> dy = 2\Delta_f(\Psi, Q),$$
where
$$ \Delta_f(\Psi, Q) = \sum_{q} \sumstar_{\chiq} \Psi \left(\frac{q}{Q}\right) \Lambda_1(f \times \chi).$$
By orthogonality relation of $\sumstar$ in Lemma \ref{lem:orthogonal}, we obtain that
$$\Delta_f(\Psi, Q) =  \sum_{m, n=1}^\infty \frac{\sigma_f(m) \sigma_f(n)}{\sqrt{mn}}\sum_{\substack{d, r\\(dr, mn) = 1 \\ r|m- n}} \phi(r) \mu(d) \Psi\bfrac{dr}{Q} V_f(m, n, dr).
$$
By the same arguments as Section \ref{sec:truncation}, we can truncate the sum with acceptable error term by invoking Lemma \ref{prop:largesieve}. Therefore we consider
$$\dt_f\left(\Psi, Q\right) =  \sum_{m, n=1}^\infty \frac{\sigma_f(m) \sigma_f(n)}{\sqrt{mn}}\sum_{\substack{d, r\\(dr, mn) = 1 \\ r|m - n}} \phi(r) \mu(d) \Psi\bfrac{dr}{Q} V_f\left(m, n, \frac{dr}{\lon}\right),$$
We can write
\begin{equation*} 
\dt_f\left(\Psi, Q\right) = \Df  +\Smf + \Gf
\end{equation*}
where the diagonal term $\Df$ consists of the terms with $m=n$, the term $\Smf$ consists of the remaining terms with $d>D$ and $\Gf$ consists of the rest of the terms with $d \leq D$. 

Since $L(s, f \times \chi_0)$ is an entire function, we can treat $L(s, f \times \chi_0)$ in the same way as treating non-principle character twists. Therefore, we bound $\Smf$ and $\Gf$ in the same way as bounding non-principal character contributions of $\Sm$ (Section \ref{sec:calS}) and $\G$ (Section \ref{sec:errorEg} and using Proposition \ref{thm:4twistedmomentIntandSumoverq}). Moreoever, there is no main term contribution from off-diagonal terms. 

For the diagonal term $\Df,$ the argument is the same as in Section \ref{sec:diagonal}, but we use Lemma \ref{lem:eulerprodfortwistedfourth} instead. This completes the proof of Theorem \ref{thm:twisted4moment}.
\section{Acknowledgements}
The first author learned about the asymptotic large seive from Brian Conrey at the {\it Arithmetic Statistics} MRC program at Snowbird. She would like to thank both Brian Conrey and the organizers of the program. Part of this work was done while the second author was visiting Centre de Recherches Math\'{e}matiques - he is grateful for their kind hospitality.

We are grateful to Henryk Iwaniec, Micah Milinovich and Maksym Radziwill for useful comments on a draft of this paper.  We would also like to thank the anonymous referee for a number of helpful comments.


\end{document}